\documentclass{amsart}
\usepackage{xypic}

\usepackage{
amssymb,
amsmath,
amsthm,
graphicx,
amscd,
multind,
eufrak,
}
\theoremstyle{plain}
\newtheorem{thm}{Theorem}[section]
\newtheorem{lm}[thm]{Lemma}
\newtheorem{prop}[thm]{Proposition}
\theoremstyle{definition}
\newtheorem{de}[thm]{Definition}
\newtheorem{ex}[thm]{Example}
\newtheorem{re}[thm]{Remark}
\newcommand{\RR}{{\mathbb R}}
\newcommand{\NN}{{\mathbb N}}
\renewcommand{\AA}{{\mathbb A}}
\newcommand{\im}{\operatorname{im}}
{\begin{figure} \begin{center}}%
{\end{center} \end{figure}}

\def\sgn{{\rm sgn}\,}

\newcommand{\Spec}{\operatorname{Spec}}
\newcommand{\Sym}{\operatorname{Sym}}
\newcommand{\id}{\operatorname{id}}

\newcommand{\SL}{\operatorname{SL}\nolimits}

\newcommand{\Mmn}[1]{\mathrm{M}_{#1}}
\newcommand{\Mn}[1][n]{\mathrm{M}_{#1}}
\newcommand{\SMn}[1][n]{\mathrm{SM}_{#1}}
\newcommand{\OMn}[1][n]{\mathrm{OM}_{#1}}
\newcommand{\SOMn}[1][n]{\mathrm{SOM}_{#1}}
\newcommand{\Mnk}[2][k]{\mathrm{M}_{#2}^{\leq #1}}
\newcommand{\SMnk}[2][k]{\mathrm{SM}_{#2}^{\leq #1}}
\newcommand{\OMnk}[2][k]{\mathrm{OM}_{#2}^{\leq #1}}
\newcommand{\SOMnk}[2][k]{\mathrm{SOM}_{#2}^{\leq #1}}
\newcommand{\Snk}[2][k]{\mathrm{S}_{#2}^{(#1)}}
\newcommand{\Vnk}[2][k]{\mathrm{V}_{#2}^{(#1)}}
\newcommand{\tOMk}[1][k]{\tilde{\mathrm{OM}}_{\infty}^{\leq #1}}
\newcommand{\tVk}[1][k]{\tilde{\mathrm{V}}_{\infty}^{(#1)}}
\newcommand{\tA}{\tilde{A}_\infty}

\begin{document}
\title{Finiteness for the $k$-factor model\\ and chirality varieties}
\author[J.~Draisma]{Jan Draisma}
\address[Jan Draisma]{
Department of Mathematics and Computer Science\\
Technische Universiteit Eindhoven\\
P.O. Box 513, 5600 MB Eindhoven, Netherlands\\
and Centrum voor Wiskunde en Informatica, Amsterdam, The
Netherlands}
\thanks{The author is supported by DIAMANT, an NWO
mathematics cluster.}
\email{j.draisma@tue.nl}
\maketitle

\begin{abstract}
This paper deals with two families of algebraic varieties arising from
applications. First, the {\em $k$-factor model} in statistics, consisting
of $n \times n$ covariance matrices of $n$ observed Gaussian variables
that are pairwise independent given $k$ hidden Gaussian variables. Second,
{\em chirality varieties} inspired by applications in chemistry. A
point in such a chirality variety records chirality measurements of all
$k$-subsets among an $n$-set of ligands. Both classes of varieties are
given by a parameterisation, while for applications having polynomial
equations would be desirable. For instance, such equations could be used
to test whether a given point lies in the variety. We prove that in a
precise sense, which is different for the two classes of varieties, these
equations are finitely characterisable when $k$ is fixed and $n$ grows.
\end{abstract}

\section{Results}

\subsection*{The $k$-factor model}
Factor analysis addresses the problem of testing whether $n$ observed
random variables are conditionally independent given $k$ hidden variables,
called the factors. In the case where the joint distribution of all $n+k$
variables is multivariate Gaussian, the parameter space $F_{n,k}$ for
the $k$-factor model is the set of $n \times n$-covariance matrices of
the form $\Sigma + \Gamma$ where $\Sigma$ is diagonal positive definite
and $\Gamma$ is positive semidefinite of rank at most $k$. An algebraic
approach to factor analysis seeks to determine all polynomial relations
among the matrix entries in $F_{n,k}$; these relations are called model
invariants \cite{Drton07}.

Clearly, any principal $m \times m$-submatrix of a matrix in $F_{n,k}$
lies in $F_{m,k}$. An important question of theoretical interest is
whether, for fixed $k$, there exists an $m$ such that for $n \geq m$
the model $F_{k,n}$ is completely characterised by the fact that
each principal $m \times m$-submatrix lies in $F_{k,m}$. For $k=2$
this question was settled in the affirmative very recently; $m=6$
suffices \cite{Drton08}.  We prove the corresponding statement for
the Zariski closure of the model, i.e., for the set of all real (or
complex) $n \times n$-matrices satisfying all model invariants. Apart from
polynomial equalities the definition of the model $F_{k,n}$ also involves
inequalities, which our approach does not take into consideration.

Our theorem to this effect needs the following notation. Let $K$ be
a field; all varieties and schemes will be defined over $K$. If $X$
is a scheme over $K$ and $S$ is a $K$-algebra (commutative with $1$),
then we write $X(S)$ the set of $S$-rational points of $X$. Our schemes
will be affine, but not necessarily of finite type. So $X=\Spec R$
for some $K$-algebra $R$ and $S$-rational points are the $K$-algebra
homomorphisms $R \rightarrow S$.

For a natural number $n$ we write $[n]$ for the set $\{1,\ldots,n\}$
and $\Mn,\SMn$ for the affine spaces over $K$ of $n \times n$-matrices
and of symmetric $n \times n$-matrices, respectively. We also write
$\OMn$ for the affine space of {\em off-diagonal $n \times n$-matrices}.
This is the space $\AA^{n^2-n}$ where we think of the coordinates as the
off-diagonal entries $y_{ij},\ i \neq j$ of an $n \times n$-matrix, so
that the notion of principal submatrix of an off-diagonal matrix has an
obvious meaning. Similarly we write $\SOMn$ for the space of symmetric
off-diagonal $n \times n$-matrices. There are natural projections
$\Mn \rightarrow \OMn$ and $\SMn \rightarrow \SOMn$. Given a second
natural number $k$ we write $\Mnk{n} \subseteq \Mn$ for the subvariety
of matrices of rank at most $k$ and $\SMnk{n} \subseteq \SMn$ for the
subvariety of symmetric matrices of rank at most $k$. Our first finiteness
result concerns the varieties $\OMnk{n}$ and $\SOMnk{n}$, which are the
scheme-theoretic images of $\Mnk{n}$ and of $\SMnk{n}$, respectively. In
concrete terms, the ideal of $\OMnk{n}$ is the intersection of the ideal
of $\Mnk{n}$ with the polynomial algebra in the off-diagonal matrix
entries, and similarly for $\SOMnk{n}$. It seems rather hard to determine
these ideals explicitly; for fixed $k$ and $n$ they can in principle be
computed using Gr\"obner basis techniques \cite{Drton07}.

\begin{ex}
For $k=2$ and $n=5$ the variety $\SOMnk[2]{5}$ is a
hypersurface in $\SOMn[5]$ with equation 
\[ \frac{1}{10} \sum_{\sigma \in \Sym(5)} 
	\sgn(\sigma) \sigma(y_{12}y_{23}y_{34}y_{45}y_{51})=0 ,
\]
where $y_{ij}=y_{ji}$ is the $(i,j)$-matrix entry and $\Sym(5)$
acts by simultaneously permuting rows and columns. The factor $1/10$
comes from the dihedral group stabilising the $5$-cycle, so this
equation has $12$ terms. This equation is called the {\em pentad} in
\cite{Drton07}. Experiments there show that for $n$ up to $9$ pentads
and off-diagonal $3 \times 3$-minors generate the ideal of $\SOMnk[2]{n}$.
\end{ex}

For any subset $I \subseteq [n]$ of size $m$ and any matrix $y$ in $\Mn$
we write $y[I] \in \Mn[m]$ for the principal submatrix of $y$ with rows
and columns labelled by $I$; this notation is also used for off-diagonal
matrices. If $y \in \Mnk{n}(K)$, then also $y[I] \in \Mnk{m}(K)$.
Conversely, $y \in \Mn(K)$ lies in $\Mnk{n}(K)$ if and only if all
its $(k+1) \times (k+1)$-minors vanish. This implies that if $n \geq
2(k+1)$, then $y \in \Mn(K)$ lies in $\Mnk{n}(K)$ if and only if $y[I]
\in \Mnk{2(k+1)}(K)$ for all $I \subseteq [n]$ of size $2(k+1)$. Moreover,
this statement holds scheme-theoretically, as well: the ideal of $\Mnk{n}$
is generated by the pullbacks of the ideal of $\Mnk{2(k+1)}$ under the
morphisms $y \mapsto y[I]$; this is just a restatement of the well-known
fact that the $(k+1)\times(k+1)$-minors generate the ideal of $\Mnk{n}$
\cite{Bruns88}. Note that we need $2(k+1)$ here, rather than for instance
$(k+1)$, because we are only taking {\em principal} submatrices.

\begin{thm}[Set-theoretic finiteness for the $k$-factor model]
\label{thm:KFactorModel}
There exists a natural number $N_0$, depending only on $k$,
such that for all $n \geq N_0$ we have
\[ \OMnk{n}(K)=\{y \in \Mn(K) \mid y[I] \in \OMnk{N_0}(K)
	\text{ for all } I \subseteq [n] \text{ of size }
	N_0 \}. \]
Similarly, there exists a natural number $N_1$, depending
only on $k$, 
such that for all $n \geq N_1$ we have 
\[ \SOMnk{n}(K)=\{y \in \SMn(K) \mid y[I] \in \SOMnk{N_1}(K)
	\text{ for all } I \subseteq [n] \text{ of size }
	N_1 \}. \]
\end{thm}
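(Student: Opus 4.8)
The plan is to deduce both equalities from a single finiteness statement ``at infinity'' together with a Noetherianity-up-to-symmetry argument, and then to transfer the conclusion back to each finite $n \geq N_0$.

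\emph{Reduction to infinite matrices.} Let $\OMn[\infty]$ be the affine space with coordinates $y_{ij}$ for $i \neq j$ in $\NN$, on which the group $\Sym(\infty)$ of finite permutations of $\NN$ acts by simultaneously permuting rows and columns; for the statement about growing $n$ it is really the monoid of strictly increasing injections $\NN \to \NN$ that matters. Write $\rho_n \colon \OMn[\infty] \to \OMn$ for restriction to $[n]$ and set $\OMnk{\infty} := \varprojlim_n \OMnk{n}$, so that the ideal of $\OMnk{\infty}$ in $K[\OMn[\infty]] = \bigcup_n K[\OMn]$ is the directed union $\bigcup_n \rho_n^*\, I(\OMnk{n})$, which is $\Sym(\infty)$-stable since each $I(\OMnk{n})$ is $\Sym(n)$-stable. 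For each $N$ put
\[
  X_N := \{\, y \in \OMn[\infty] \mid y[I] \in \OMnk{N} \text{ for all } I \subseteq \NN \text{ of size } N \,\},
\]
a $\Sym(\infty)$-stable closed subset. Because a principal submatrix of a rank-$\leq k$ matrix again has rank $\leq k$ and the off-diagonal projections commute with taking principal submatrices, one checks scheme-theoretically that a principal submatrix of a point of $\OMnk{N+1}$ lies in the appropriate $\OMnk{m}$; hence $X_1 \supseteq X_2 \supseteq \cdots$, and since a model invariant of $\OMnk{m}$ involves only finitely many coordinates, $\bigcap_N X_N(K) = \OMnk{\infty}(K)$.

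\emph{The Noetherianity engine.} The crucial input is that $\OMn[\infty]$ (and, in the symmetric case, $\SOMn[\infty]$) is topologically Noetherian up to the $\Sym(\infty)$-action, i.e.\ every descending chain of $\Sym(\infty)$-stable closed subsets stabilises. I would obtain this from the bounded rank: writing a rank-$\leq k$ matrix $z$ as $z = uv$ with $u$ having $k$ columns and $v$ having $k$ rows yields a $\Sym(\infty)$-equivariant parameterisation $\tA \to \OMnk{\infty}$, surjective on points, with $\tA = (\AA^k \times \AA^k)^{\NN}$ (in the symmetric case $(\AA^k)^{\NN}$ together with one fixed finite-dimensional factor, a symmetric bilinear form on $\AA^k$); in $\tA$ only one index grows, so it is Noetherian up to $\Sym(\infty)$ by the classical theorems of Cohen, Aschenbrenner--Hillar and Hillar--Sullivant, and topological Noetherianity up to $\Sym(\infty)$ transfers along such a parameterisation. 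Granting this, the chain $X_1 \supseteq X_2 \supseteq \cdots$ stabilises at some $N_0$ depending only on $k$, and since $K$ is a field, $X_{N_0}(K) = \bigcap_N X_N(K) = \OMnk{\infty}(K)$.

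\emph{Descent to finite $n$, and the main obstacle.} For the descent, fix $n \geq N_0$ and let $y \in \OMn(K)$ have all its principal $N_0$-submatrices in $\OMnk{N_0}(K)$; pad $y$ with zeros to $\tilde y \in \OMn[\infty](K)$. Every principal $N_0$-submatrix of $\tilde y$ is, after relabelling, the image of $y[J]$ with $J := I \cap [n]$ under the coordinate inclusion $\OMn[{|J|}] \hookrightarrow \OMn[N_0]$ that fills the remaining entries with zeros; here $y[J] \in \OMnk{|J|}(K)$ (enlarge $J$ to an $N_0$-subset of $[n]$, possible as $n \geq N_0$, and restrict), and padding a rank-$\leq k$ matrix with zeros preserves rank $\leq k$, so this inclusion carries $\OMnk{|J|}$ into $\OMnk{N_0}$ scheme-theoretically. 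Hence $\tilde y \in X_{N_0}(K) = \OMnk{\infty}(K)$, and applying the morphism $\rho_n \colon \OMnk{\infty} \to \OMnk{n}$ gives $y = \rho_n(\tilde y) \in \OMnk{n}(K)$; the reverse inclusion is immediate because model invariants restrict to principal submatrices. The symmetric statement follows the same path with $\OMn, \Mn$ replaced by $\SOMn, \SMn$. The main obstacle is the Noetherianity engine: one must isolate a parameterising space $\tA$ in which the rank-$k$ data forms a fixed finite-dimensional factor while only the index set grows, and transfer Noetherianity up to $\Sym(\infty)$ along it. The delicate point in that transfer is surjectivity on $K$-points, since over a non-algebraically closed $K$ a matrix in $\OMnk{N}(K)$ need not visibly admit a rank-$\leq k$ completion over $K$; one circumvents this either by arguing over $\overline{K}$ and descending, or by working directly with $\Sym(\infty)$-stable radical ideals and using that pullback along the scheme-theoretically dominant parameterisation is injective on them.
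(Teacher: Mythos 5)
Your overall skeleton --- pass to the limit $\OMn[\infty]$, show that a suitable descending chain of $\Sym(\NN)$-stable closed sets stabilises, and descend to finite $n$ by zero-padding --- is the same as the paper's, and your reduction and descent steps are essentially sound. But the ``Noetherianity engine'', which is the entire mathematical content of the theorem, is both misstated and not proved. The statement you invoke --- that $\OMn[\infty](K)$ itself is topologically Noetherian up to $\Sym(\NN)$ --- is false: the closed stable sets cut out by the orbits of the cycle monomials $y_{12}y_{21},\ y_{12}y_{23}y_{31},\ y_{12}y_{23}y_{34}y_{41},\ldots$ form a strictly decreasing chain (the paper records exactly this counterexample). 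What is actually needed, and what suffices because your sets $X_N$ with $N \geq 2(k+1)$ all lie in the locus $\tOMk(K)$ where the off-diagonal $(k+1)\times(k+1)$-minors vanish, is that $\tOMk(K)$ is $\Sym(\NN)$-Noetherian.

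Your proposed proof of the engine does not establish this. The parameterisation $(u,v)\mapsto \bigl(\sum_p u_{ip}v_{pj}\bigr)_{i\neq j}$ from $\tA=(\AA^k\times\AA^k)^{\NN}$ hits only the set-theoretic image of the honest rank-$\leq k$ matrices, which is in general a proper (constructible, dense) subset of the $K$-points of the scheme-theoretic image $\OMnk{\infty}$; this failure of surjectivity has nothing to do with $K$ being algebraically closed, so passing to $\overline{K}$ does not repair it, and neither does the radical-ideal variant, since pullback along a merely dominant map is not injective on radical ideals (consider $(x,y)\mapsto(x,xy)$ and the ideal $(x)$). Worse, even a genuine surjection onto $\OMnk{\infty}(K)$ would not suffice: Lemma \ref{lm:Surjection} transfers Noetherianity only to the target of the surjection, and your chain $X_N$ lives in $\tOMk(K)$, which strictly contains $\OMnk{\infty}(K)$, so pulling the chain back to a space that only covers $\OMnk{\infty}(K)$ yields the constant chain and gives no information. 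Closing this gap is where the paper's real work lies: an induction on $k$ that stratifies $\tOMk(K)$ according to whether some off-diagonal $k\times k$-minor is nonzero; on the stratum where $\det Y[I,J]\neq 0$ the matrix is honestly of rank $\leq k$ outside the finitely many rows indexed by $J$ and columns indexed by $I$, whose entries are genuinely free parameters (the ring $R$ in the variables $b,c,d,e$); and the fact that the distinguished minor can sit anywhere in the infinite matrix is absorbed by the induced-space construction $\Sym(\NN)\times_H X$ via Lemma \ref{lm:FibreBundle}. None of these ingredients appears in your proposal.
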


This theorem settles the ``radical'' part of \cite[Question 29]{Drton07}.
To relate this theorem to the $k$-factor model take $K=\RR$. Since
the diagonal entries in $F_{n,k}$ are ``free parameters'', all model
invariants are generated by those involving only the off-diagonal
entries. Hence a matrix lies in the Zariski closure of $F_{n,k}$ if
and only if its image in $\SOMn(\RR)$ lies in $\SOMnk{n}(\RR)$. Note
that the theorem does not claim the stronger finiteness property in
\cite[Question 29]{Drton07}, that the entire ideal of $\SOMnk{n}$ is
generated by the pull-back of the ideal of $\SOMnk{N_1}$ under taking
principal submatrices. Although we expect this to be true, our methods
do not suffice to prove this result.

\subsection*{Chirality varieties}

Our second finiteness result concerns another family of algebraic
varieties, motivated by applications in chemistry such as the following
\cite{Ruch67}. One imagines four distinct ligands in the vertices of a
regular tetrahedron $T$, which bond to an atom in the centre of $T$,
and one is interested in some measurable property of the resulting
structure which is invariant under orientation preserving symmetries
of $T$ but not under reflection. An example of such a property is
optical activity. One assumes that the ligands can be characterised
by scalars $x_1,\ldots,x_4$ and that the property is captured by a
scalar valued function F of those four variables. The smallest-degree
case is that where $F(x_1,\ldots,x_4):=\prod_{i<j}(x_i-x_j)$. This
function is called a {\em chirality product}; it changes its sign under
reflections. Next one assumes that one has $n$ ligands, and wants to know
the polynomial relations among the values $F(x_{i_1},\ldots,x_{i_4})$
as $\{i_1,\ldots,i_4\}$ runs over all $4$-subsets of $[n]$. Again one
hopes that these relations are characterised by finitely many types;
and this is exactly what we shall prove.

More precisely and generally, fix a natural number $k$ and for all $n
\geq k$ consider the affine space $\Snk{n}$ whose coordinates $y_J$ are
parameterised by all $k$-subsets of $[n]$. In the example above $k$ equals
$4$. Consider the morphism $\AA^n \rightarrow \Snk{n}$ that sends $x$ to
the point $y \in \Snk{n}$ whose coordinate $y_J$ equals the Vandermonde
determinant $\prod_{i,j \in J: i<j}(x_i-x_j)$. The scheme-theoretic image
of this map is denoted $\Vnk{n}$. We call $\Vnk{n}$ a {\em chirality
variety}; its ideal is the set of all relations between the Vandermonde
determinants, or outcomes of chirality measurements for all $k$-subsets of
$n$ ligands. For any subset $I$ of $[n]$ of size $m \geq k$ we have a
natural morphism $\pi_I:\Snk{n} \rightarrow \Snk{m},\ y \mapsto y[I]$,
which forgets the coordinates corresponding to $k$-subsets $J \not
\subseteq [m]$. By construction, this map sends $\Vnk{n}$ into $\Vnk{m}$.

\begin{thm}[Scheme-theoretic finiteness for chirality
varieties in characteristic zero]
\label{thm:VandermondeScheme}
In the (chemically relevant) case where the characteristic of $K$ is zero
there exists a natural number $N_2$, depending only on $k$, such that for
all $n \geq N_2$ the scheme $\Vnk{n}$ is the scheme-theoretic intersection
of the pre-images $\pi_I^{-1} \Vnk{N_2}$ over all $N_2$-subsets $I$
of $[n]$.
\end{thm}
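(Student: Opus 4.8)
The plan is to recast the theorem as an equivariant stabilisation statement for the chain of ideals $I_{k,n}$ of the chirality varieties, pass to an infinite-dimensional limit, and apply Noetherianity up to symmetry; the real work, and the place where characteristic zero enters, is the descent from the limit back to finite $n$. Write $R_{k,n}=K[y_J:J\in\binom{[n]}{k}]$ for the coordinate ring of $\Snk{n}$ and $I_{k,n}\subseteq R_{k,n}$ for the ideal of $\Vnk{n}$; the group $\Sym(n)$ acts on $R_{k,n}$ by signed permutation of the indices and $I_{k,n}$ is stable under this action. For an $N_2$-subset $I\subseteq[n]$ the pullback ideal $\pi_I^*I_{k,N_2}$ is just $I_{k,N_2}$ with its indices relabelled along the increasing bijection $[N_2]\to I$, which preserves order and hence introduces no sign, so that over all $N_2$-subsets $I$ these pullbacks run exactly through the translates $\sigma(I_{k,N_2})$, $\sigma\in\Sym(n)$. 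Hence the scheme-theoretic intersection $\bigcap_I\pi_I^{-1}\Vnk{N_2}$ has ideal $\langle\Sym(n)\cdot I_{k,N_2}\rangle$, and Theorem~\ref{thm:VandermondeScheme} becomes the assertion that there is an $N_2$, depending only on $k$, with
\[ I_{k,n}=\langle\Sym(n)\cdot I_{k,N_2}\rangle\qquad\text{for all }n\geq N_2. \]
Here $\supseteq$ is automatic, since $I_{k,N_2}\subseteq I_{k,n}$ and $I_{k,n}$ is $\Sym(n)$-stable; only $\subseteq$ has content.

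Next I would pass to the limit. Put $R_k=K[y_J:J\in\binom{\NN}{k}]$ with the action of $\Sym(\infty)=\bigcup_n\Sym(n)$; it is the coordinate ring of the inverse limit $\Snk{\infty}$ of the $\Snk{n}$, and $R_k=\bigcup_n R_{k,n}$. Let $\Vnk{\infty}=\Spec(R_k/I_{k,\infty})$ be the scheme-theoretic image of the map from $\Spec K[x_1,x_2,\dots]$ to $\Snk{\infty}$ sending $x$ to the point whose $J$-coordinate is $\prod_{i<j\in J}(x_i-x_j)$, so that $I_{k,\infty}$ is the $\Sym(\infty)$-stable ideal of all relations among the Vandermonde determinants. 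As the Vandermonde attached to $J\subseteq[n]$ only involves $x_1,\dots,x_n$, a polynomial in $R_{k,n}$ lies in $I_{k,n}$ if and only if it lies in $I_{k,\infty}$; that is, $I_{k,n}=I_{k,\infty}\cap R_{k,n}$. The engine is that $R_k$ is Noetherian up to the $\Sym(\infty)$-action: every $\Sym(\infty)$-stable ideal is generated, as an ideal, by finitely many $\Sym(\infty)$-orbits. For variables indexed by $k$-subsets this follows from the classical Noetherianity-up-to-symmetry theorems for polynomial rings whose variables are indexed by $k$-tuples (Cohen; Aschenbrenner--Hillar; Hillar--Sullivant), via a well-quasi-ordering argument on the index sets or by realising $R_k$ as an equivariant quotient of the $k$-tuple ring. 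Applying this to the ascending chain of $\Sym(\infty)$-ideals $(\langle\Sym(\infty)\cdot I_{k,n}\rangle)_n$, whose union is $I_{k,\infty}$, yields an $N_2$, depending only on $k$, with $I_{k,\infty}=\langle\Sym(\infty)\cdot I_{k,N_2}\rangle$.

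The genuine obstacle is the descent: upgrading $I_{k,\infty}=\langle\Sym(\infty)\cdot I_{k,N_2}\rangle$ to $I_{k,n}=\langle\Sym(n)\cdot I_{k,N_2}\rangle$, and not merely to the inclusion $\langle\Sym(n)\cdot I_{k,N_2}\rangle\subseteq I_{k,\infty}\cap R_{k,n}=I_{k,n}$. A priori, an identity $f=\sum_j h_j\,\sigma_j g_j$ witnessing $f\in I_{k,n}$ (with $g_j\in I_{k,N_2}$, $\sigma_j\in\Sym(\infty)$, $h_j\in R_k$) may involve coordinates $y_J$ with $J\not\subseteq[n]$ that cancel only in the total sum, and purely combinatorial Noetherianity --- which is already strong enough for the set-theoretic, radical-level statement --- does not control this spill-over. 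To eliminate it I would work with the whole chain $(I_{k,n})_n$, using the identification of $R_k$ with the symmetric algebra $\Sym(W)$ on the $k$-th exterior power $W$ of a countable-dimensional vector space, which endows $R_k$ with a $\operatorname{GL}$-action compatible with the $\Sym(\infty)$-action. In characteristic zero, complete reducibility of the finite-dimensional $\operatorname{GL}_n$-representations in play supplies equivariant projections under which any such expression for $f$ can be pushed back into $R_{k,n}$ without leaving $\langle\Sym(n)\cdot I_{k,N_2}\rangle$; this is precisely where the hypothesis $\operatorname{char}K=0$ is indispensable, and where the scheme-theoretic argument diverges from the radical-level one behind Theorem~\ref{thm:KFactorModel} (in positive characteristic these projections, and the conclusion itself, are not expected to survive). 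Carrying out this descent gives $I_{k,n}=\langle\Sym(n)\cdot I_{k,N_2}\rangle$ for all $n\geq N_2$, which by the first step is the asserted scheme-theoretic intersection; so the main difficulty is concentrated entirely in making that projection step rigorous.
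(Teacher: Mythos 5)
There is a genuine gap, and it sits exactly at the step you call the ``engine.'' You assert that $R_k=K[y_J:J\in\binom{\NN}{k}]$ is Noetherian up to the $\Sym(\NN)$-action, citing Aschenbrenner--Hillar and Hillar--Sullivant. That is false for $k\geq 2$: those theorems concern $K[x_{ij}]$ with $i$ ranging over a \emph{finite} index set and only $j$ ranging over $\NN$, and the polynomial ring on variables indexed by $k$-subsets (or $k$-tuples) of $\NN$ is \emph{not} equivariantly Noetherian --- the paper points this out explicitly, with a reference to \cite[Proposition 5.2]{Aschenbrenner07}, as the very reason the ambient scheme $\Snk{\infty}$ cannot be used directly. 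Realising $R_k$ as an equivariant quotient of the ``$k$-tuple ring'' does not help, since that ring has the same defect. Consequently your deduction that the chain $\langle\Sym(\NN)\cdot I_{k,n}\rangle$ stabilises has no support, and the whole argument collapses at its first substantive step. The idea the paper uses to get around this is precisely what is missing from your proposal: replace $\Snk{\infty}$ by the closed subscheme $\tVk$ cut out by the Pl\"ucker relations (a single $\Sym(\NN)$-orbit of equations containing $\Vnk{\infty}$), identify its coordinate ring with the ring of $\SL_k$-invariants of $K[x_{ij}\mid i\in[k],\ j\in\NN]$ via the First Fundamental Theorem, and transfer equivariant Noetherianity from that ring (where Aschenbrenner--Hillar--Sullivant genuinely applies, since $i$ ranges over $[k]$) to the invariant subring by means of the Reynolds operator.

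This also means you have located the role of characteristic zero in the wrong place. The descent from $I_{k,\infty}=\langle\Sym(\NN)\cdot I_{k,N_2}\rangle$ to $I_{k,n}=\langle\Sym(n)\cdot I_{k,N_2}\rangle$ --- your ``spill-over'' worry --- is handled purely formally by Lemma \ref{lm:InftyChain} and the remark following it, using the splitting maps $\tau_{n,q}$ (which set $x_j=x_n$ for $j>n$) and the commutation condition \eqref{eq:Commute}; no representation theory and no hypothesis on the characteristic is needed there. Characteristic zero enters only through complete reducibility of $\SL_k$-modules, which is what makes the Reynolds operator $R\rightarrow R^{\SL_k}$ available in Proposition \ref{prop:Reynolds} and hence makes $\tVk$ scheme-theoretically $\Sym(\NN)$-Noetherian. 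Your closing paragraph gestures at equivariant projections coming from complete reducibility, so you are near the right tool, but you aim it at the descent step (where it is not needed) rather than at establishing Noetherianity of the correct ambient object (where it is indispensable).
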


In more concrete terms, the ideal of $\Vnk{n}$ is generated by the
pull-backs of $\Vnk{N_2}$ under all projections $\pi_I$. The condition
on the characteristic of $K$ comes from the fact that our proof uses
the existence of a Reynolds operator.

\subsection*{Preview}
The remainder of this paper is organised as follows. In Section
\ref{sec:FinitenessProblems} we cast our two main results in
a common framework, in which a sequence of schemes with group
actions is replaced by its limit, which is a scheme of infinite
type. Section \ref{sec:TopGNoetherianity} introduces and develops
the new notion of $G$-Noetherianity for topological spaces. Lemma
\ref{lm:FibreBundle} is particularly important for our proof of Theorem
\ref{thm:KFactorModel}. Section \ref{sec:SchemeGNoetherianity} introduces
and develops the analoguous notion for rings (or schemes). Theorem
\ref{thm:AHS} by Aschenbrenner, Hillar, and Sullivant is used extensively
in all proofs, while Proposition \ref{prop:Reynolds} shows how to use
the Reynolds operator to prove scheme-theoretic finiteness for certain
schemes, including the chirality varieties in characteristic $0$.

\section*{Acknowledgments}
I thank Jochen Kuttler for pointing out that the Reynolds operator can
be used to translate my earlier set-theoretic finiteness result for
chirality varieties into the present scheme-theoretic result. I also
thank Seth Sullivant for numerous discussions on the subject, and for
introducing me to the field of algebraic statistics.  Finally, I thank
Mathias Drton for sharing his results in \cite{Drton08} at an early stage.

\section{Finiteness problems for chains of schemes}
\label{sec:FinitenessProblems}

Our results above fit into the following set-up, which is similar to
that of \cite[Section 4]{Aschenbrenner07}.  First, we are given an
infinite sequence
\[
\xymatrix{
A_1 \ar@<2pt>@{^(->}[r]^{\tau_{12}} &
A_2 \ar@<2pt>[l]^{\pi_{21}} \ar@<2pt>@{^(->}[r]^{\tau_{23}} &
A_3 \ar@<2pt>[l]^{\pi_{32}} \ar@<2pt>@{^(->}[r]^{\tau_{34}} &
\ldots \ar@<2pt>[l]^{\pi_{43}}
}
\]
where each $A_n=\Spec(T_n)$ is an affine scheme over $K$, $\tau_{n,n+1}$
is a closed embedding, and $\pi_{n+1,n}$ is a morphism satisfying
$\pi_{n+1,n} \tau_{n,n+1}=\id_{A_n}$. For $m \leq n$ we define
$\tau_{m,n}:=\tau_{n-1,n} \cdots \tau_{m,m+1}$ and $\pi_{n,m}:=\pi_{m+1,m}
\cdots \pi_{n,n-1}$. 
Second, we are given a sequence
\[
\xymatrix{
Y_1 \ar@<2pt>@{^(->}[r]^{\tau_{12}} &
Y_2 \ar@<2pt>[l]^{\pi_{21}} \ar@<2pt>@{^(->}[r]^{\tau_{23}} &
Y_3 \ar@<2pt>[l]^{\pi_{32}} \ar@<2pt>@{^(->}[r]^{\tau_{34}} &
\ldots \ar@<2pt>[l]^{\pi_{43}}
}
\]
where each $Y_n$ is a closed subscheme of $A_n$ with ideal $I_n \subseteq
A_n$, and we require that $\pi_{n+1,n}$ maps $Y_{n+1}$ into $Y_n$ and
that $\tau_{n,n+1}$ maps $Y_n$ into $Y_{n+1}$.
Third, we are given a sequence of groups
\[ G_1 \subseteq G_2 \subseteq \ldots \]
with for each $n$ an action of $G_n$ on $A_n$ by automorphisms stabilising
$Y_n$, and such that $\pi_{n+1,n}$ is $G_n$-equivariant. We do {\em not}
require that $\tau_{n,n+1}$ be $G_n$-equivariant.  We make the following
somewhat technical assumption: for any triple $q \geq n \geq m$ and any
$g \in G_q$ there exist a $p \leq m$, $g' \in G_n,$ and $g'' \in G_m$
such that
\begin{equation} \label{eq:Commute} \tag{*}
\pi_{q,m}g\tau_{n,q}=g''\tau_{p,m}\pi_{n,p}g'.
\end{equation}
Now let $A_\infty:=\lim_{\leftarrow n} A_n$ be the projective limit
of the $A_n$, i.e., the affine scheme corresponding to the algebra
$T_\infty:=\bigcup_n T_n$, where for $m \leq n$ the algebra $T_m$ is
identified with a subalgebra of $T_n$ by means of $\pi_{n,m}^*$. Since
the latter map is $G_m$-equivariant, the union $G$ of all $G_n$ acts
naturally on $T_\infty$ and hence on $A_\infty$. Define $Y_\infty
\subseteq A_\infty$ similarly; then $G$ stabilises $Y_\infty$. We write
$\tau_{n,\infty}$ and $\pi_{\infty,n}$ for the natural embedding $A_n
\rightarrow A_\infty$ and the natural projection $A_\infty \rightarrow
A_n$, respectively.

\begin{lm} \label{lm:InftyChain}
Let $m$ be a natural number. For any $K$-algebra $S$ the following two
statements are equivalent:
\begin{enumerate}
\item for all $n \geq m$ the set $Y_n(S)$ consists of all $y \in A_n(S)$
for which $\pi_{n,m} G_n y \subseteq Y_m(S)$, and
\label{it:chain}
\item the set $Y_\infty(S)$ consists of all $y \in A_\infty(S)$ such
that $\pi_{\infty,m} G y \subseteq Y_m(S)$.
\label{it:infty}
\end{enumerate}
\end{lm}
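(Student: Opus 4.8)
The plan is to rephrase both statements in terms of $S$-points of the projective limit and then run two short diagram chases, one per implication. First I would set up the dictionary. Since $\pi_{n+1,n}\tau_{n,n+1}=\id_{A_n}$, the comorphism $\pi_{n,m}^*$ exhibits $T_m$ as a split subalgebra of $T_n$, so $T_\infty=\bigcup_n T_n$ and an $S$-point of $A_\infty$ is the same thing as a compatible system $(y_n)_n$ with $y_n\in A_n(S)$ and $\pi_{n,m}(y_n)=y_m$; under this identification $\pi_{\infty,n}(y)=y_n$. Because $Y_\infty$ is built the same way, its ideal in $T_\infty$ is the nested union $\bigcup_n I_n$, so a homomorphism $T_\infty\to S$ kills it exactly when it kills every $I_n$; hence $Y_\infty(S)=\{y\in A_\infty(S):\pi_{\infty,n}(y)\in Y_n(S)\text{ for all }n\}$. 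I would also record that one inclusion of each of (1) and (2) is automatic: if $y\in Y_n(S)$ then $\pi_{n,m}(G_ny)\subseteq Y_m(S)$ because $G_n$ stabilises $Y_n$ and $\pi_{n,m}$ maps $Y_n$ into $Y_m$, and similarly for $Y_\infty$; so in both statements only the reverse inclusion carries content.

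To prove (1)$\Rightarrow$(2), take $y\in A_\infty(S)$ with $\pi_{\infty,m}(Gy)\subseteq Y_m(S)$ and show $\pi_{\infty,n}(y)\in Y_n(S)$ for every $n$. For $n<m$ this is immediate: $\pi_{\infty,n}(y)=\pi_{m,n}(\pi_{\infty,m}(y))$, the point $\pi_{\infty,m}(y)\in Y_m(S)$ (take $g=e$), and $\pi_{m,n}$ maps $Y_m$ into $Y_n$. For $n\geq m$ set $y_n:=\pi_{\infty,n}(y)$; since the $G_n$-action on $T_\infty$ restricts to $T_n$, the projection $\pi_{\infty,n}$ is $G_n$-equivariant, so for $g\in G_n$ one gets $\pi_{n,m}(gy_n)=\pi_{n,m}(\pi_{\infty,n}(gy))=\pi_{\infty,m}(gy)\in Y_m(S)$. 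Thus $\pi_{n,m}(G_ny_n)\subseteq Y_m(S)$ and (1) gives $y_n\in Y_n(S)$. This direction needs nothing beyond equivariance of $\pi_{\infty,n}$; the hypothesis (*) is not used.

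For the converse (2)$\Rightarrow$(1), fix $n\geq m$ and $y\in A_n(S)$ with $\pi_{n,m}(G_ny)\subseteq Y_m(S)$, lift $y$ to $\tilde y:=\tau_{n,\infty}(y)\in A_\infty(S)$ (so $\pi_{\infty,n}(\tilde y)=y$ by $\pi_{\infty,n}\tau_{n,\infty}=\id$), and aim to show $\tilde y\in Y_\infty(S)$; by (2) it suffices to show $\pi_{\infty,m}(G\tilde y)\subseteq Y_m(S)$. Given $g\in G$, pick $q\geq n$ with $g\in G_q$. Writing $\tau_{n,\infty}=\tau_{q,\infty}\tau_{n,q}$, $\pi_{\infty,m}=\pi_{q,m}\pi_{\infty,q}$, and using $G_q$-equivariance of $\pi_{\infty,q}$ together with $\pi_{\infty,q}\tau_{q,\infty}=\id$, one finds $\pi_{\infty,m}(g\tilde y)=\pi_{q,m}\bigl(g\,\tau_{n,q}(y)\bigr)$---precisely the left-hand side of (*). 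Invoking (*) produces $p\leq m$, $g'\in G_n$, $g''\in G_m$ with $\pi_{\infty,m}(g\tilde y)=g''\,\tau_{p,m}\bigl(\pi_{n,p}(g'y)\bigr)$. Now $g'y\in G_ny$, so $\pi_{n,m}(g'y)\in Y_m(S)$ by hypothesis; since $p\leq m$, $\pi_{n,p}(g'y)=\pi_{m,p}(\pi_{n,m}(g'y))\in Y_p(S)$ as $\pi_{m,p}$ maps $Y_m$ into $Y_p$; then $\tau_{p,m}$ carries $Y_p$ into $Y_m$ and $g''$ stabilises $Y_m$, so $\pi_{\infty,m}(g\tilde y)\in Y_m(S)$. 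As $g$ was arbitrary, $\pi_{\infty,m}(G\tilde y)\subseteq Y_m(S)$, whence $\tilde y\in Y_\infty(S)$ by (2) and therefore $y=\pi_{\infty,n}(\tilde y)\in Y_n(S)$.

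The hard part will be the bookkeeping in $(2)\Rightarrow(1)$: one must massage the mixed composition $\pi_{\infty,m}\circ g\circ\tau_{n,\infty}$ into the exact form $\pi_{q,m}\,g\,\tau_{n,q}$ appearing in (*)---keeping track that $g$ acts only on the levels $\geq q$ and that the embeddings $\tau$ are \emph{not} equivariant---and then absorb the leftover maps $\pi_{m,p}$, $\tau_{p,m}$ and the automorphism $g''$ produced by (*) using only that they preserve membership in the various $Y_\bullet(S)$, which is exactly what the shape of (*) is engineered to permit. Everything else is a routine manipulation of the limit together with the two automatic inclusions noted above.
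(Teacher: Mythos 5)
Your proof is correct and follows essentially the same route as the paper's: the same identification of $Y_\infty(S)$ via its projections, the same split into $n\le m$ and $n\ge m$ for (1)$\Rightarrow$(2), and the same lift $\tau_{n,\infty}y$ plus invocation of (*) for (2)$\Rightarrow$(1). If anything, you are slightly more explicit than the paper in justifying $\pi_{\infty,m}(g\tilde y)=\pi_{q,m}(g\,\tau_{n,q}(y))$ and in unwinding why $g''\tau_{p,m}\pi_{n,p}(g'y)$ lands in $Y_m(S)$.
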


\begin{proof}
For the implication \eqref{it:chain} $\Rightarrow$ \eqref{it:infty}
let $y \in A_\infty(S)$ have the property that $\pi_{\infty,m}
Gy \subseteq Y_m(S)$. By definition, the element $y$ lies in
$Y_\infty(S)$ if and only if $\pi_{\infty,n} y$ lies in $Y_n(S)$
for all $n$. For $n \leq m$ this condition is fulfilled because
$\pi_{\infty,n}(y)=\pi_{m,n}\pi_{\infty,m}(y)$ and the right-hand side
lies in $Y_n(S)$ by the property of $y$ and the fact that $\pi_{m,n}$ maps
$Y_m(S)$ into $Y_n(S)$. Next consider $y':=\pi_{\infty,n} y \in A_n(S)$
for $n \geq m$. For $g \in G_n$ we have $\pi_{n,m} gy'=\pi_{\infty,m}g
y$, which lies in $Y_m(S)$ by assumption. Hence by \eqref{it:chain} $y'$
lies in $Y_n(S)$, as needed.

For the implication \eqref{it:infty} $\Rightarrow$ \eqref{it:chain}
let $y \in A_n(S)$ satisfy $\pi_{n,m} G_n y \subseteq Y_m(S)$. Let
$y':=\tau_{n,\infty} y$; we prove that $y'$ lies in $Y_\infty(S)$ by
showing that $\pi_{\infty,m} Gy' \subseteq Y_m(S)$. Indeed, let $g \in
G$, say $g \in G_q$. If $q \leq n$ then $g \in G_n$ and $\pi_{\infty,m}
gy'=\pi_{n,m} gy \in Y_m(S)$ by the condition on $y$. Hence suppose
that $q \geq n$ and consider the element $\pi_{\infty,m}g y'= \pi_{q,m}
g \tau_{n,q} y$. Now we invoke \eqref{eq:Commute} above to find a
$p \leq m$ and a $g' \in G_n$ such that the right-hand side equals
$g'' \tau_{p,m} \pi_{n,p} g'y$, which by the property of $y$ lies in
$Y_m(S)$. By \eqref{it:infty} we conclude that $y'$ does indeed lie in
$Y_\infty(S)$, hence $y$ lies in $Y_n(S)$.
\end{proof}

The condition that \eqref{it:chain} be true for all $K$-algebras $S$
is equivalent to the statement that for $n \geq m$ the ideal $I_n
\subseteq T_n$ of $Y_n$ be the smallest $G_n$-stable ideal containing
$I_m$. Similarly, the condition that \eqref{it:infty} be true for all
$K$-algebras $S$ is equivalent to the statement that the ideal $I_\infty$
be the smallest $G$-stable ideal of $T_\infty$ containing $I_m$.
Consequently, these two scheme-theoretic statements are also equivalent.

We now fit our main results into this set-up.

\subsection*{The $k$-factor model}

Here $A_n$ equals $\OMn$ or $\SOMn$, $Y_n$ equals $\OMnk{n}$ or
$\SOMnk{n}$, and $G_n=\Sym(n)$ acts by simultaneous row- and column
permutations. The map $\pi_{n+1,n}$ sends an (off-diagonal) $(n+1) \times
(n+1)$-matrix to its principal $n \times n$-submatrix in the upper
left corner, and $\tau_{n,n+1}$ augments an (off-diagonal) $n \times
n$-matrix with zero $(n+1)$-st row and column. That $\pi_{n+1,n}$ and
$\tau_{n,n+1}$ map $Y_{n+1}$ into $Y_n$ and vice versa follows from the
fact that they map $\Mnk{n+1}$ into $\Mnk{n}$ and vice versa. Finally,
condition \eqref{eq:Commute} is fulfilled since any $m \times m$-principal
submatrix of a $q \times q$-matrix obtained from an $n \times n$-matrix
$y$ by augmenting with zeros and applying an element of $\Sym(q)$ can
also be obtained by applying an element of $\Sym(n)$ to $y$, taking
a suitable principal $p \times p$-submatrix, augmenting with zeroes,
and reordering rows and columns with a permutation from $\Sym(m)$.

Our proof of Theorem \ref{thm:KFactorModel}, set-theoretic finiteness
of the $k$-factor model, will focus on the $K$-rational points of the
schemes $A_n=\OMn$ and $Y_n=\OMnk{n}$. More specifically, we shall prove
that there exist finitely many elements $f_1,\ldots,f_l \in T_\infty$ such
that $y \in A_\infty(K)$ lies in $Y_\infty(K)$ if and only if $f_i(gy)=0$
for $i=1,\ldots,l$ and $g \in G$. By Lemma \ref{lm:InftyChain} with $S=K$
this implies Theorem \ref{thm:KFactorModel}.

\subsection*{Chirality varieties} Here $A_n$ equals $\Snk{n}$, $Y_n$ equals
$\Vnk{n}$, and $G_n=\Sym(n)$ acts on the coordinates as follows: $gy_J$
equals $(-1)^a y_{gJ}$ where $a$ is the number of pairs $i<j$ in $J$
such that $gi>gj$; we call $a$ the number of inversions of $g$ on
$J$. Note that this action makes the parameterisation $y_J=\prod_{i,j
\in J: i<j} (x_i-x_j)$ of $\Vnk{n}$ $\Sym(n)$-equivariant.  The map
$\pi_{n+1,n}$ projects onto the coordinates $y_J$ with $J$ a subset
of $[n]$. That $\pi_{n+1,n}$ maps $Y_{n+1}$ into $Y_n$ is clear from
the parameterisation.  The map $\tau_{n,q}$ for $q \geq n$ is defined
by its dual as follows: for $J$ a $k$-subset of $[q]$ we set
\[ 
\tau_{n,q}^*: y_J \mapsto 
	\begin{cases}
	y_J & \text{ if } J \subseteq [n-1]\\
	0 & \text{ if } |J \setminus [n-1]| \geq 2, \text{ and }\\
	y_{J-\{j\}+\{n\}} & \text{ if } J \setminus
	[n-1]=\{j\}.
	\end{cases}
\]
This reflects the effect of taking $x_j=x_n$ for all $j>n$ in the
parameterisation, which shows that $\tau_{n,q}$ does indeed map $Y_n$
into $Y_q$. Note that $\tau_{n,q}=\tau_{q-1,q} \cdots \tau_{n,n+1}$,
as in our set-up. Now we need to verify condition \eqref{eq:Commute},
whose dual statement reads
\[ \tau_{n,q}^* g \pi_{q,m}^* =
	g'\pi_{n,p}^*\tau_{p,m}^*g'' \]
for suitable $p \leq m,g'\in \Sym(n),g''\in \Sym(m)$. Let $L$ be the
set of elements in $[m]$ that are mapped into $[n-1]$ by $g$. For any
$k$-subset $J$ of $[m]$ the map $\tau_{n,q}^* g
\pi_{q,m}^*$ sends 
\[ 
y_J \mapsto 
	\begin{cases}
	(-1)^a y_{gJ} & \text{ if } J \subseteq L\\
	0 & \text{ if } |J \setminus L| \geq 2, \text{ and}\\
	(-1)^a y_{gJ-\{gj\}+\{n\}} & \text{ if } J
	\setminus L=\{j\},
	\end{cases} 
\]
where $a$ is the number of inversions of $g$ on $J$.  We distinguish two
cases. First suppose that $L$ is all of $[m]$. Then the last two cases
do not occur, and we may take $g'':=1 \in \Sym(m)$, $p:=m$, and any $g'
\in \Sym(n)$ which agrees with $g$ on $[m]$.  Second, suppose that $L
\subsetneq [m]$. Set $p:=|L|+1 \leq m$ and choose $g'' \in \Sym(m)$ such
that $g''$ maps $L$ bijectively into $[p-1]$ and such that for all $i,j$
in $L$ we have $g''i<g''j$ if and only $gi<gj$. This ensures that the
number of inversions of $g''$ on any subset of $[m]$ containing at most
one element outside of $L$ is the same as the number of inversions of $g$
on that subset. Then $\pi_{n,p}^*\tau_{p,m}^* g''$ maps
\[ 
y_J \mapsto 
	\begin{cases}
	(-1)^a y_{g''J} & \text{ if } J \subseteq L\\
	0 & \text{ if } |J \setminus L| \geq 2, \text{ and }\\
	(-1)^a y_{g''J-\{g''j\}+\{p\}} & \text{ if } J \setminus L=\{j\}
	\end{cases},
\]
where $a$ is both the number of inversions of $g$ on $J$ and that of
$g''$ on $J$. Hence if we compose this with an element $g' \in \Sym(n)$
which is increasing on $[p]$ and satisfies $g'g''=g$ on $L$ and $g'p=n$,
then we are done.

For Theorem \ref{thm:VandermondeScheme}, scheme-theoretic finiteness of
chirality varieties in characteristic $0$, we shall prove that there
exist finitely many elements $f_1,\ldots,f_l$ in the ideal $I_\infty$
of $Y_\infty=\Vnk{\infty}$ whose $G$-orbits generate the ideal of
$Y_\infty$. By Lemma \ref{lm:InftyChain} and the remark following it
this implies Theorem \ref{thm:VandermondeScheme}.

\section{Topological $G$-Noetherianity}
\label{sec:TopGNoetherianity}

In this section we develop a purely topological notion that can be used
to prove set-theoretic finiteness results.

\begin{de}
Let $G$ be a group acting by homeomorphisms on a topological space $X$;
we shall call $X$ a $G$-space. Then $X$ is called {\em $G$-Noetherian}
if every chain $X_1 \supseteq X_2 \supseteq \ldots$ of closed $G$-stable
subsets of $X$ stabilises in the sense that there exists an $m$ such
that $X_n=X_m$ for all $n \geq m$.
\end{de}

\begin{lm} \label{lm:ClosedSubset}
Let $X$ be a $G$-space and $Y$ a $G$-stable closed subset of $X$. If $X$
is $G$-Noetherian, then so is $Y$.
\end{lm}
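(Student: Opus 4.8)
The plan is to reduce any descending chain of closed $G$-stable subsets of $Y$ directly to such a chain in $X$ and then invoke the hypothesis. First I would take an arbitrary chain $Y_1 \supseteq Y_2 \supseteq \ldots$ of closed $G$-stable subsets of the subspace $Y$. The key point is the transitivity of closedness: since $Y$ is closed in $X$, any subset of $Y$ that is closed in the subspace topology on $Y$ is also closed in $X$. Likewise, because $G$ acts on $X$ and merely restricts its action to the $G$-stable set $Y$, a subset of $Y$ is stable under the induced $G$-action if and only if it is $G$-stable as a subset of $X$. Hence each $Y_n$ is a closed $G$-stable subset of $X$.

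Consequently $Y_1 \supseteq Y_2 \supseteq \ldots$ is a descending chain of closed $G$-stable subsets of $X$, and by the assumed $G$-Noetherianity of $X$ there is an $m$ with $Y_n = Y_m$ for all $n \geq m$. Since the original chain was an arbitrary chain of closed $G$-stable subsets of $Y$, this shows $Y$ is $G$-Noetherian. I do not expect any real obstacle: the entire argument is the transitivity of the closed-subset relation together with the compatibility of the $G$-action with passage to a closed subspace, so in the paper this would be a single short paragraph.
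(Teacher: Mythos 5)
Your proposal is correct and is exactly the argument the paper gives: a descending chain of closed $G$-stable subsets of $Y$ is, by transitivity of closedness and of $G$-stability, such a chain in $X$, hence stabilises. You have merely spelled out the two transitivity observations that the paper leaves implicit.
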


\begin{proof} 
Every descending chain of $G$-stable closed subsets in
$Y$ is also such a chain in $X$, hence stabilises.
\end{proof}

\begin{lm} \label{lm:Surjection}
Let $X$ and $Y$ be $G$-spaces. If $Y$ is $G$-Noetherian and if there
exists a surjective $G$-equivariant continuous map $Y \rightarrow X$,
then $X$ is $G$-Noetherian.
\end{lm}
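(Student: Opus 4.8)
The plan is to transport a descending chain from $X$ back to $Y$ along the given surjective $G$-equivariant continuous map, call it $\phi:Y \rightarrow X$, invoke $G$-Noetherianity of $Y$, and then push the resulting stabilisation forward to $X$ using surjectivity.

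Concretely, I would start with an arbitrary descending chain $X_1 \supseteq X_2 \supseteq \ldots$ of closed $G$-stable subsets of $X$ and set $Y_n:=\phi^{-1}(X_n)$. Continuity of $\phi$ makes each $Y_n$ closed in $Y$; $G$-equivariance makes each $Y_n$ a $G$-stable subset, since $y \in Y_n$ implies $\phi(gy)=g\phi(y) \in gX_n=X_n$; and $X_{n+1} \subseteq X_n$ yields $Y_{n+1} \subseteq Y_n$. Thus $(Y_n)_n$ is a descending chain of closed $G$-stable subsets of $Y$, so by $G$-Noetherianity of $Y$ there is an $m$ with $Y_n=Y_m$ for all $n \geq m$. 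Applying $\phi$ and using that $\phi$ is onto, hence $\phi(\phi^{-1}(Z))=Z$ for every subset $Z \subseteq X$, we obtain $X_n=\phi(Y_n)=\phi(Y_m)=X_m$ for all $n \geq m$. This shows the original chain stabilises, i.e.\ $X$ is $G$-Noetherian.

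I expect no real obstacle here: this is a routine pullback--pushforward argument. The only place the surjectivity hypothesis is used is the identity $\phi(\phi^{-1}(X_n))=X_n$, and the only places continuity and equivariance enter are in checking that the preimages $Y_n$ remain closed and $G$-stable. One could equivalently phrase the argument via the order-preserving maps $Z \mapsto \phi^{-1}(Z)$ and $W \mapsto \phi(W)$ between the posets of $G$-stable closed subsets of $X$ and of $Y$, noting that surjectivity of $\phi$ makes the composite on the $X$-side the identity, so that a stabilising chain upstairs forces a stabilising chain downstairs.
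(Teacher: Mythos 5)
Your argument is correct and is exactly the paper's proof, merely spelled out in more detail: pull the chain back along $\phi$, use $G$-Noetherianity of $Y$, and recover the original chain via $\phi(\phi^{-1}(X_n))=X_n$, which holds by surjectivity. No further comment is needed.
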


\begin{proof} 
The pre-image of a descending chain of $G$-stable closed
subsets of $X$ is such a chain in $Y$, hence stabilises. By
surjectivity the chain in $X$ also stabilises.
\end{proof}

\begin{lm} \label{lm:Union}
Let $X$ and $Y$ be $G$-spaces. Then their disjoint union, equipped
with the disjoint-union topology, is $G$-Noetherian if and
only if both $X$ and $Y$ are.
\end{lm}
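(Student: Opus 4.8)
The plan is to exploit the fact that in the disjoint union $Z:=X\sqcup Y$, equipped with the componentwise $G$-action, the two pieces $X$ and $Y$ sit inside $Z$ as clopen, $G$-stable subsets, so that every closed $G$-stable subset of $Z$ decomposes compatibly into its intersections with $X$ and with $Y$.

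For the ``only if'' direction I would simply note that $X$ is a $G$-stable closed subset of $Z$, so that if $Z$ is $G$-Noetherian then so is $X$ by Lemma \ref{lm:ClosedSubset}; the same argument applies to $Y$.

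For the ``if'' direction, suppose $X$ and $Y$ are $G$-Noetherian and let $Z_1\supseteq Z_2\supseteq\cdots$ be a descending chain of closed $G$-stable subsets of $Z$. Since $X$ is clopen and $G$-stable, $Z_n\cap X$ is a closed $G$-stable subset of $X$, and likewise $Z_n\cap Y$ in $Y$; moreover $Z_n=(Z_n\cap X)\sqcup(Z_n\cap Y)$. The chains $(Z_n\cap X)_n$ and $(Z_n\cap Y)_n$ are descending, hence stabilise at some indices $m_X$ and $m_Y$ respectively by the $G$-Noetherianity of $X$ and of $Y$; for $m:=\max(m_X,m_Y)$ we then get $Z_n=Z_m$ for all $n\geq m$.

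I do not expect any genuine obstacle here; the only point requiring a moment's care is making explicit that the disjoint-union topology renders $X$ and $Y$ clopen and that the componentwise $G$-action restricts correctly to each, after which both implications are immediate — the forward one being, in fact, a special case of Lemma \ref{lm:ClosedSubset}.
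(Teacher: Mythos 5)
Your proof is correct and follows essentially the same route as the paper: the paper likewise observes that every closed $G$-stable subset of the disjoint union decomposes as $C \cup D$ with $C$, $D$ closed and $G$-stable in $X$ and $Y$ respectively, and that a descending chain stabilises if and only if both component chains do. Your explicit appeal to Lemma \ref{lm:ClosedSubset} for the ``only if'' direction is a harmless repackaging of the same decomposition.
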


\begin{proof}
A closed $G$-stable subset of $X \cup Y$ is of the form $C
\cup D$ with $C$ and $D$ closed and $G$-stable in $X$ and
$Y$, respectively. A descending chain of such sets
stabilises if and only if the induced chains in $X$ and $Y$
stabilise. 
\end{proof}

The above lemmas are exact analogues of statements on ordinary
Noetherianity of topological spaces. Now, however, we introduce a
construction where the $G$-structure plays an essential role. Suppose
that $Y$ is an $H$-space, where $H$ is a subgroup of $G$. Construct
the space $G \times_H Y:=G \times Y/\sim$ where $\sim$ is the smallest
equivalence relation with $(gh,y) \sim (g,hy)$ for all $g\in G,h \in
H,y \in Y$. The space $G \times Y$ carries the product topology with $G$
discrete, so that every closed subset is of the form $\bigcup_{g \in G}
\{g\} \times Y_g$ with each $Y_g$ closed in $G$. The space $G \times_H Y$
caries the quotient topology of the product topology and a
topological $G$-action by left multiplication.

\begin{lm} \label{lm:FibreBundle}
If $Y$ is $H$-Noetherian, then $G \times_H Y$ is
$G$-Noetherian.
\end{lm}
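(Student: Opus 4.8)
The plan is to describe the $G$-stable closed subsets of $G\times_H Y$ in terms of $H$-stable closed subsets of $Y$, and then reduce a descending chain in $G\times_H Y$ to a descending chain in $Y$, which stabilises by hypothesis. Write $q\colon G\times Y\to G\times_H Y$ for the quotient map and identify $Y$ with the image $q(\{1\}\times Y)$, which is a (closed) subspace of $G\times_H Y$ carrying its original $H$-action. The key observation is that a subset $Z\subseteq G\times_H Y$ is $G$-stable if and only if $Z=G\cdot Z'$ where $Z'=Z\cap Y$, and $Z$ is closed if and only if $Z'$ is closed in $Y$: indeed $q^{-1}(Z)=\bigcup_{g\in G}\{g\}\times gZ'$, well-defined precisely because $Z'$ is $H$-stable (since $H$ stabilises $Y$ inside $G\times_H Y$ and $Z$ is $G$-stable), and this set is closed in the product topology exactly when $Z'$ is closed in $Y$. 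Thus $Z\mapsto Z\cap Y$ is a bijection, inclusion-preserving in both directions, between $G$-stable closed subsets of $G\times_H Y$ and $H$-stable closed subsets of $Y$.

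Granting this correspondence, the lemma is immediate: given a descending chain $Z_1\supseteq Z_2\supseteq\cdots$ of $G$-stable closed subsets of $G\times_H Y$, the sets $Z_n\cap Y$ form a descending chain of $H$-stable closed subsets of $Y$, which stabilises at some $m$ by $H$-Noetherianity; applying $Z'\mapsto G\cdot Z'$ (the inverse of the correspondence) shows $Z_n=G\cdot(Z_n\cap Y)=G\cdot(Z_m\cap Y)=Z_m$ for all $n\ge m$.

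The step requiring the most care is verifying the two claims packaged into the correspondence: first, that $Y$, viewed via $q(\{1\}\times Y)$, is genuinely a closed subspace of $G\times_H Y$ on which the original $H$-action agrees with the restricted $G\times_H Y$-action (this needs that distinct $H$-orbits in $\{1\}\times Y$ stay distinct after passing to the quotient, i.e.\ that $(1,y)\sim(1,y')$ forces $y'\in Hy$, which follows by tracking the equivalence relation); and second, the claim that $q^{-1}(G\cdot Z')$ is closed in $G\times Y$ iff $Z'$ is closed in $Y$ — here one uses that $G$ carries the discrete topology, so $q^{-1}(G\cdot Z')=\bigcup_{g\in G}\{g\}\times gZ'$ is closed iff each slice $gZ'$ is closed in $Y$, and since $g$ acts by a homeomorphism this holds iff $Z'$ is closed. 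I do not anticipate a genuine obstacle, only the bookkeeping of checking that $G$-stability of $Z$ translates into $H$-stability of $Z\cap Y$ and conversely that $G\cdot Z'$ really is $G$-stable and closed; once these are in place the Noetherianity transfer is a one-line argument.
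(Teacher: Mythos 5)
Your argument is essentially the paper's: pass to the preimage in $G\times Y$, use $G$-stability to see that the slice over the identity determines everything, observe that saturation under $\sim$ makes that slice $H$-stable, and transfer the descending chain to $Y$. One small correction: since $G$ does not act on $Y$, the expression $gZ'$ for $g\notin H$ is undefined; in fact the slice of $q^{-1}(Z)$ over \emph{every} $g\in G$ is just $Z'$ itself (because $[(g,y)]=g\cdot[(e,y)]$ and $Z$ is $G$-stable), which makes the closedness claim immediate and removes the need to invoke any homeomorphism of $Y$.
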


\begin{proof}
Let $Z_1 \supseteq Z_2 \supseteq \ldots$ be a chain of $G$-stable closed
subsets of $G \times_H Y$. The pre-image of $Z_i$ in $G \times Y$ is
of the form $\bigcup_{g \in G} \{g\} \times Y_{i,g}$ with $Y_{i,g}$
closed in $Y$. As $Z_i$ is $G$-stable, we have $Y_{i,g}=Y_{i,e}$ for
all $g \in G$, and as $Z_i$ is a union of equivalence classes $Y_{i,e}$
is $H$-stable.  The chain $Y_{1,e} \supseteq Y_{2,e} \supseteq \ldots$
stabilises as $Y$ is $H$-Noetherian, hence so does the
chain $Z_1 \supseteq Z_2 \supseteq \ldots$. 
\end{proof}

In our application to the $k$-factor model, the topological spaces will
be sets of rational points of affine schemes over $K$, equipped with
the Zariski topology where closed sets are given by the vanishing by of
elements in the corresponding $K$-algebra. The following lemma describes
what $G$-Noetherianity means in this case.

\begin{lm} \label{lm:GNoethForKSets}
Suppose that $A=\Spec T$ is an affine scheme over $K$, where $T$ is
a $K$-algebra, and that $G$ acts by automorphisms on $A$, hence on
$T$. Then the following two statements are equivalent:
\begin{enumerate}
\item $A(K)$, equipped with the Zariski topology, is
$G$-Noetherian; and 
\item for every $G$-stable ideal $I$ of $T$ there
exist finitely many elements $f_1,\ldots,f_l$ such that $y
\in A(K)$ lies in the closed set defined by $I$ if and only
if $f_i(gy)=0$ for all $i=1,\ldots,l$ and for all $g \in G$.
\end{enumerate}
\end{lm}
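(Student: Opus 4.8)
The plan is to run everything through the inclusion-reversing correspondence between subsets of $A(K)$ and ideals of $T$: attach to a subset $Z\subseteq A(K)$ the ideal $I(Z)=\{f\in T: f(y)=0\text{ for all }y\in Z\}$, and to an ideal $J$ the closed set $V(J)(K)=\{y\in A(K): f(y)=0\text{ for all }f\in J\}$. Since $G$ acts on $T$ by automorphisms, a closed set $Z$ is $G$-stable exactly when $I(Z)$ is a $G$-stable ideal, every $G$-stable closed set is of the form $V(I)(K)$ with $I$ a $G$-stable ideal (take $I=I(Z)$), and for $f_1,\dots,f_l\in T$ the set $\{y\in A(K): f_i(gy)=0\text{ for all }i\text{ and }g\in G\}$ is exactly $V(J)(K)$, where $J$ is the smallest $G$-stable ideal containing $f_1,\dots,f_l$; call such a $J$ \emph{finitely $G$-generated}. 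In this language statement (2) reads: the zero set of an arbitrary $G$-stable ideal $I$ already equals the zero set of a finitely $G$-generated sub-ideal of $I$. With this dictionary in hand both implications become short.

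For (1) $\Rightarrow$ (2), fix a $G$-stable ideal $I$ and consider the family of closed $G$-stable sets $V(J)(K)$ as $J$ runs over the finitely $G$-generated ideals contained in $I$. This family is nonempty, and $G$-Noetherianity — equivalent, as usual, to the minimum condition on closed $G$-stable subsets — gives it a minimal member $V(J_0)(K)$, with $J_0$ the $G$-stable ideal generated by some $f_1,\dots,f_l\in I$. Then $V(I)(K)\subseteq V(J_0)(K)$ since $J_0\subseteq I$; and if the inclusion were strict there would be $y\in V(J_0)(K)$ and $f\in I$ with $f(y)\neq 0$, whence $V(J_0+\langle gf\mid g\in G\rangle)(K)$ would be a finitely $G$-generated member of the family contained in $I$ and strictly smaller than $V(J_0)(K)$ (it omits $y$, as the condition fails for $g=e$), contradicting minimality. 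Hence $V(J_0)(K)=V(I)(K)$, which is (2).

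For (2) $\Rightarrow$ (1), let $X_1\supseteq X_2\supseteq\cdots$ be a descending chain of closed $G$-stable subsets and put $I_n=I(X_n)$, an ascending chain of $G$-stable ideals with $V(I_n)(K)=X_n$. Applying (2) to $I=\bigcup_n I_n$ produces finitely many witnesses $f_1,\dots,f_l\in I$ with $V(I)(K)=\{y\in A(K): f_i(gy)=0\text{ for all }i,g\}$; each $f_j$ already lies in some $I_{n_j}$, so with $N=\max_j n_j$ all of the $f_j$ lie in $I_N$ and hence vanish on $X_N$. Since $X_N$ is $G$-stable it follows that $X_N\subseteq\{y: f_i(gy)=0\text{ for all }i,g\}=V(I)(K)=\bigcap_m X_m\subseteq X_n\subseteq X_N$ for every $n\geq N$, so the chain stabilises at $X_N$.

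The substance of the proof is just this translation together with the minimum-condition form of $G$-Noetherianity; the one point that needs genuine care — and which I expect to be the main obstacle — is guaranteeing that the finitely many functions witnessing (2) can be taken inside the ideal $I$ (this is what lets them surface at a finite stage $I_N$ in the backward direction). In the forward direction it is built in by only using sub-ideals of $I$; in general it comes down to the difference, over a possibly non-algebraically-closed $K$, between a $G$-stable ideal and the ``$K$-closed'' ideal $I(V(I)(K))$ that it cuts out — for the latter the witnesses automatically lie inside, since a function vanishing on $V(I)(K)$ already belongs to $I(V(I)(K))$ — so some bookkeeping with vanishing ideals is needed, but no idea beyond the Galois correspondence.
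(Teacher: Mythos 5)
Your proof is correct and follows essentially the same route as the paper: the backward direction is identical, and your minimal-element argument in the forward direction is just the minimum-condition reformulation of the paper's greedy descending chain $X_0\supseteq X_1\supseteq\cdots$ of zero sets of finitely $G$-generated subideals of $I$. The worry you raise at the end is resolved exactly as you suspect: statement (2) is to be read (and is delivered by the forward direction) with the witnesses $f_1,\ldots,f_l$ lying in $I$, which is precisely what lets them land in some $I_N=I(X_N)$ rather than merely in $I(\bigcap_n X_n)$; the paper's own proof of (2) $\Rightarrow$ (1) uses this tacitly when it chooses $n$ with $f_1,\ldots,f_l\in I_n$.
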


\begin{proof}
Suppose first that $A(K)$ is $G$-Noetherian and let $I$ be a $G$-stable
ideal in $T$. Construct a chain $X_0 \supseteq X_1 \supseteq \ldots$ of
$G$-stable closed subsets of $A(K)$ as follows: $X_0:=A(K)$, and for $i
\geq 1$ either choose $f_i \in I$ which does not vanish identically on
$X_{i-1}$ and set $X_i:=\{y \in X_{i-1} \mid f_i(gy)=0 \text{ for all }
g \in G\}$ or, if such an $f_i$ does not exist, then $X_i=X_{i-1}$. By
$G$-Noetherianity this $G$-stable chain stabilises at some $X_l$, and
then $f_1,\ldots,f_l$ have the required property.

For the converse suppose that every $G$-stable ideal $I$ has the stated
property, and consider a chain $X_1 \supseteq X_2 \supseteq
\ldots$ of $G$-stable closed subsets. Let $I_n$ be the ideal
in $T$ vanishing on $X_n$. The union of all $I_n$ is a
$G$-stable ideal in $T$, hence let $f_1,\ldots,f_l$ be as in
the assumption. Let $n$ be such that $f_1,\ldots,f_l \in
I_n$. Then we claim that $X_m=X_n$ for $m \geq n$. Indeed,
if not, then let $y \in X_n \setminus X_m$. This means that
some element of $I_m \subseteq I$ does not vanish on $y$,
which contradicts the fact that $f_i(gy)=0$ for all $i$ and
$g$. 
\end{proof}

\section{Scheme-theoretic $G$-Noetherianity}
\label{sec:SchemeGNoetherianity}

In this section we introduce and develop the notion of $G$-Noetherianity
for rings. Dually, we will also adopt this terminology for
the corresponding affine schemes. 

\begin{de}
Let $G$ be a group acting by automorphisms on a ring $R$; we shall simply
call $R$ a $G$-ring. Then $R$ is called {\em $G$-Noetherian} if
every chain $I_1 \subseteq I_2 \subseteq \ldots$ of $G$-stable ideals
stabilises. 
\end{de}

For all main theorems we need the following fundamental result.

\begin{thm}[\cite{Aschenbrenner07,Hillar08}] \label{thm:AHS}
The ring $K[x_{ij} \mid i=1,\ldots,l,\ j=0,1,2,\ldots]$, on which the
group $\Sym(\NN)$ of bijections from $\NN$ to itself acts by $\sigma
x_{ij}=x_{i\sigma j}$, is $\Sym(\NN)$-Noetherian.
\end{thm}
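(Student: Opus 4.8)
The plan is to deduce this from two classical well-quasi-ordering results, Dickson's lemma and Higman's lemma, by means of an equivariant Gr\"obner basis argument; write $R$ for the ring in question and $G=\Sym(\NN)$. The key first observation is that one should not work with $G$ itself: on a polynomial ring in infinitely many variables no term order can be invariant under a group acting with finite orbits on the variables, since $m\prec\sigma m$ for distinct $m,\sigma m$ in one orbit would force $m\prec\sigma m\prec\sigma^{2}m\prec\cdots$, contradicting that $\sigma$ has finite order on the finitely many variables of $m$. The remedy is to pass to the monoid $\mathrm{Inc}(\NN)$ of strictly increasing injections $\NN\to\NN$, acting by $\phi\cdot x_{ij}=x_{i\phi(j)}$. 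Every $G$-stable ideal of $R$ is automatically $\mathrm{Inc}(\NN)$-stable, since a polynomial involves only finitely many column indices and on those $\phi$ agrees with some element of $G$; hence any ascending chain of $G$-stable ideals is a chain of $\mathrm{Inc}(\NN)$-stable ideals, and it suffices to show that $R$ is $\mathrm{Inc}(\NN)$-Noetherian. The gain is that $R$ then carries $\mathrm{Inc}(\NN)$-invariant term orders: compare monomials first by total degree, then break ties by comparing, from the top down, the largest variable occurring, with $x_{ij}<x_{i'j'}$ iff $j<j'$, or $j=j'$ and $i<i'$. Every $\phi$ preserves such a $\prec$, and $\prec$ is well founded, since along a $\prec$-descending chain of degree-$d$ monomials the largest occurring variable is eventually constant, leaving only finitely many monomials.

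Second, I would settle the monomial case. Encode a monomial $\prod_{i,j}x_{ij}^{e_{ij}}$ by the finite word over the alphabet $\NN^{l}$ whose $j$-th letter is the column $(e_{1j},\dots,e_{lj})$, read up to the last nonzero column. Then $m$ lies in the $\mathrm{Inc}(\NN)$-stable monomial ideal generated by $m'$ exactly when the word of $m'$ embeds into that of $m$ in Higman's sense, i.e.\ there is a strictly increasing matching of each letter of $m'$ to a coordinatewise larger letter of $m$. Dickson's lemma says $(\NN^{l},\leq)$ is a well-quasi-order, and Higman's lemma says the set of finite words over it, under the embedding order, is again one; in particular it has no infinite antichain. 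Hence every $\mathrm{Inc}(\NN)$-stable monomial ideal is generated by the $\mathrm{Inc}(\NN)$-orbits of finitely many monomials, and ascending chains of such ideals stabilise.

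Third, I would bootstrap to arbitrary ideals using $\prec$. For an $\mathrm{Inc}(\NN)$-stable ideal $I$, let $\mathrm{in}(I)$ be the ideal generated by the leading monomials of the elements of $I$. Because $\prec$ is $\mathrm{Inc}(\NN)$-invariant and $\phi$ is injective on monomials, $\mathrm{lm}(\phi f)=\phi\,\mathrm{lm}(f)$ for all $\phi\in\mathrm{Inc}(\NN)$ and $f\in I$, so $\mathrm{in}(I)$ is an $\mathrm{Inc}(\NN)$-stable monomial ideal, and by the second step its leading monomials come, up to $\mathrm{Inc}(\NN)$, from finitely many $f_{1},\dots,f_{r}\in I$. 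Given $0\neq g\in I$, its leading monomial is a monomial multiple of some $\phi\,\mathrm{lm}(f_{s})=\mathrm{lm}(\phi f_{s})$, so subtracting a suitable monomial multiple of $\phi f_{s}$ from $g$ strictly lowers $\mathrm{lm}(g)$ without raising its degree; as $\prec$ is well founded this reduction terminates and expresses $g$ in the ideal generated by the $\mathrm{Inc}(\NN)$-orbits of $f_{1},\dots,f_{r}$. Thus every $\mathrm{Inc}(\NN)$-stable ideal is finitely generated up to the action, which yields the ascending chain condition for $\mathrm{Inc}(\NN)$-stable ideals and hence for $G$-stable ones.

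The well-quasi-ordering inputs are standard; the real obstacles are in the first and third steps. One must produce a term order on an infinite-variable ring that is at once $\mathrm{Inc}(\NN)$-invariant and well founded --- ordinary Dickson's lemma is useless here, the variables forming an infinite divisibility antichain --- and one must check that the equivariant division algorithm genuinely terminates even though reducing $g$ by tails of the $\phi f_{s}$ may introduce new variables. Making the word encoding precise enough that Higman's lemma applies verbatim, in particular the bookkeeping around zero columns, is the remaining delicate point.
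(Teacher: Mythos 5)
Your sketch is correct and follows essentially the same route as the sources the paper cites for this result (the paper itself offers no proof, only the one-paragraph summary after Theorem~\ref{thm:AHS}): pass from $\Sym(\NN)$-stable to $\mathrm{Inc}(\NN)$-stable ideals, settle the monomial case by Dickson plus Higman, and lift to arbitrary ideals via an $\mathrm{Inc}(\NN)$-invariant well-founded term order and equivariant division --- exactly the Aschenbrenner--Hillar/Hillar--Sullivant argument alluded to in the text. The delicate points you flag (invariant well-founded order, termination of reduction, zero-column bookkeeping) are handled correctly as you describe them, so no gap.
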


This theorem was first proved in \cite{Aschenbrenner07} for the case where
$l=1$, and then generalised in \cite{Hillar08}. Its proof boils down to
showing that a certain order on monomials is a well-quasi-order. The fact
that $\Sym(\NN)$-stable {\em monomial} ideals are finitely generated up to the
action of $\Sym(\NN)$ boils down to the statement that Young diagrams
are well-quasi-ordered by inclusion. This, in turn, is a special case
of the theorem in \cite{MacLagan01} that antichains of monomial ideals
are finite.

\begin{re}
In view of Section \ref{sec:FinitenessProblems} it is more
natural to replace $\Sym(\NN)$ by the direct limit $G$ of all
$\Sym(n)$, where $\Sym(n)$ is considered as the stabiliser
in $\Sym(n+1)$ of $n+1$. As both groups have the same orbits
on the ring above, that ring is also $G$-Noetherian.
\end{re}

\begin{lm} \label{lm:BasisThm}
If $R$ is a $G$-Noetherian ring, then so is $R[X]$, where
$X$ is a variable and $G$ acts only on the coefficients of
the polynomials in $R[X]$.
\end{lm}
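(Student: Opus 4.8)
The plan is to mimic the classical proof of the Hilbert Basis Theorem, working with $G$-stable ideals and leading-coefficient ideals throughout. Suppose $J_1 \subseteq J_2 \subseteq \ldots$ is an ascending chain of $G$-stable ideals of $R[X]$; I want to show it stabilises. For each $n$ and each degree $d \geq 0$, let $L_{n,d} \subseteq R$ be the set of leading coefficients of polynomials of degree exactly $d$ lying in $J_n$, together with $0$. Since $J_n$ is an ideal of $R[X]$ and $G$ acts only on coefficients, each $L_{n,d}$ is a $G$-stable ideal of $R$, and multiplying by $X$ shows $L_{n,d} \subseteq L_{n,d+1}$; so the $L_{n,d}$ for fixed $n$ themselves form an ascending chain inside $R$, and we also have $L_{n,d} \subseteq L_{n+1,d}$ as $n$ grows. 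Let $L_d := \bigcup_n L_{n,d}$, a $G$-stable ideal of $R$, and set $L := \bigcup_d L_d$, again $G$-stable.

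Next I would use $G$-Noetherianity of $R$ twice. First, the chain $L_0 \subseteq L_1 \subseteq \ldots$ stabilises, say at $L_D = L_{D+1} = \ldots$. Then for each $d = 0, 1, \ldots, D$ the chain $(L_{n,d})_n$ stabilises as $n \to \infty$ — here I should be slightly careful, since I need a single index $m$ working for all $d \leq D$ simultaneously, but that is just the maximum of finitely many stabilisation indices — so choose $m$ with $L_{n,d} = L_{m,d}$ for all $n \geq m$ and all $0 \leq d \leq D$. I claim $J_n = J_m$ for all $n \geq m$. Take $f \in J_n$ of degree $d$; I reduce $f$ modulo $J_m$ by descending induction on $d$. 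If $d \leq D$, the leading coefficient of $f$ lies in $L_{n,d} = L_{m,d}$, so it is a finite $R$-combination $\sum_i r_i c_i$ of leading coefficients $c_i$ of degree-$d$ polynomials $g_i \in J_m$; subtracting $\sum_i r_i g_i$ from $f$ kills the top term and leaves an element of $J_n$ of smaller degree. If $d > D$, then $L_d = L_D$, so the leading coefficient of $f$ lies in $L_{m,D}$ (after enlarging $m$ if necessary so that $L_{m,D} = L_D$), realised by degree-$D$ polynomials in $J_m$; multiplying those by $X^{d-D}$ and subtracting again lowers the degree. Iterating brings $f$ into $J_m$, so $J_n \subseteq J_m$, hence equality.

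The main thing to watch, and the only place the argument could go wrong, is the interplay between the \emph{two} directions of the double array $L_{n,d}$: one needs that $G$-Noetherianity of $R$ gives stabilisation in each direction, and that finitely many degrees $d \leq D$ suffice so that a single $m$ handles everything. The $G$-equivariance is essentially free because $G$ does not touch $X$, so leading-coefficient ideals are automatically $G$-stable; no appeal to Theorem \ref{thm:AHS} is needed here, only the definition of $G$-Noetherian rings. I expect the write-up to be short, with the only subtlety being the bookkeeping in the descending induction on degree when $d > D$.
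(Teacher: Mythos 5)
Your argument is correct: the double array of leading-coefficient ideals $L_{n,d}$, the two applications of the ascending chain condition on $G$-stable ideals of $R$ (once in $d$ to get the bound $D$, once in $n$ for each $d\leq D$), and the descending induction on degree with the $X^{d-D}$ trick all go through, and $G$-stability of the $L_{n,d}$ is indeed automatic because $G$ acts by automorphisms on coefficients and fixes $X$. This is exactly the approach the paper intends --- its proof is the single sentence that one can copy the proof of Hilbert's basis theorem from Lang word-by-word --- so your write-up simply supplies the details of that adaptation.
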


\begin{proof}
One can copy the proof of Hilbert's basis theorem from
\cite{Lang65} word-by-word.
\end{proof}

\begin{lm} \label{lm:SchemeImpliesSetNoeth}
Let $R$ be a $K$-algebra, and let $A=\Spec R$ be the corresponding affine
scheme over $K$. Suppose that a group $G$ acts on $R$ by $K$-algebra
automorphisms. Then if $R$ is $G$-Noetherian, then the set $A(K)$ with the
Zariski topology is a $G$-Noetherian topological space. 
\end{lm}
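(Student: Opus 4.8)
The plan is to dualise a descending chain of $G$-stable closed subsets of $A(K)$ into an ascending chain of $G$-stable ideals of $R$ and then invoke the hypothesis that $R$ is $G$-Noetherian. In other words, this is just the usual contravariant dictionary between closed sets and ideals, made $G$-equivariant.

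First I would record the relevant maps. For $X \subseteq A(K)$ put $I(X) := \{f \in R \mid y(f) = 0 \text{ for all } y \in X\}$, an ideal of $R$, and for $S \subseteq R$ put $V(S) := \{y \in A(K) \mid y(f) = 0 \text{ for all } f \in S\}$; by the definition of the Zariski topology on $A(K)$, the closed subsets are precisely the sets of the form $V(S)$. The one identity I need is $V(I(X)) = X$ for $X$ closed: the inclusion $X \subseteq V(I(X))$ is immediate, and writing $X = V(S)$ we have $S \subseteq I(X)$, whence $V(I(X)) \subseteq V(S) = X$. Consequently $X \mapsto I(X)$ is an inclusion-reversing injection from the closed subsets of $A(K)$ into the ideals of $R$.

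Second I would check equivariance. The $G$-action on $A(K)$ dual to the action on $R$ sends a point $y \colon R \to K$ to the homomorphism $gy \colon f \mapsto y(g^{-1}f)$, so that $y(gf) = (g^{-1}y)(f)$ for all $f \in R$, $y \in A(K)$, $g \in G$. If $X$ is $G$-stable and $f \in I(X)$, then for every $y \in X$ the point $g^{-1}y$ again lies in $X$, so $y(gf) = (g^{-1}y)(f) = 0$; hence $gf \in I(X)$. Thus $I(X)$ is a $G$-stable ideal of $R$ whenever $X$ is a $G$-stable closed subset of $A(K)$.

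Finally, given a chain $X_1 \supseteq X_2 \supseteq \cdots$ of $G$-stable closed subsets of $A(K)$, the associated chain $I(X_1) \subseteq I(X_2) \subseteq \cdots$ of $G$-stable ideals of $R$ stabilises since $R$ is $G$-Noetherian; say $I(X_n) = I(X_m)$ for all $n \geq m$. Applying $V$ and using $V(I(X_n)) = X_n$ yields $X_n = X_m$ for all $n \geq m$, as desired. I expect no real obstacle here: every step is a formal manipulation of the closed-set/ideal correspondence. The only point worth a word of care is that $X \mapsto I(X)$ need not surject onto the $G$-stable ideals of $R$ — distinct $G$-stable ideals may cut out the same closed subset of $A(K)$ — but the argument uses only injectivity on closed subsets, so this causes no difficulty.
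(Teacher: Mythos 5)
Your proposal is correct and follows essentially the same route as the paper: pass from the descending chain of $G$-stable closed subsets to the ascending chain of their vanishing ideals, note these are $G$-stable, invoke $G$-Noetherianity of $R$, and recover stabilisation of the closed sets via $V(I(X))=X$ for closed $X$. You merely spell out the equivariance of $X \mapsto I(X)$ and the closure identity that the paper leaves implicit.
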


\begin{proof}
Consider a chain $X_1 \supseteq X_2 \ldots$ of $G$-stable closed
subsets of $Y(K)$. Let $I_n$ be the vanishing ideal in $R$
of $X_n$. The $I_n$ form an ascending chain of $G$-stable
ideals, which stabilises as $R$ is $G$-Noetherian. Hence,
since $X_n$ is the zero set of $I_n$, the chain $X_1
\supseteq X_2 \supseteq \ldots$ also stabilises.
\end{proof}

We collect some further elementary properties of scheme-theoretic
$G$-Noetherianity. First, an analogue of Lemma
\ref{lm:ClosedSubset}.

\begin{lm} \label{lm:Epimorphism}
Let $R$ and $S$ be $G$-rings. If there exists a $G$-equivariant
epimorphism $R \rightarrow S$ and $R$ is $G$-Noetherian, then $S$
is $G$-Noetherian.
\end{lm}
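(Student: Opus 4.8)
The plan is to mimic the proof of Lemma \ref{lm:ClosedSubset}, of which this is the ring-theoretic dual, by pulling chains of $G$-stable ideals back along the given epimorphism. Write $\varphi : R \to S$ for the $G$-equivariant epimorphism and let $I_1 \subseteq I_2 \subseteq \ldots$ be an ascending chain of $G$-stable ideals of $S$.

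First I would check that each preimage $\varphi^{-1}(I_n)$ is a $G$-stable ideal of $R$. It is an ideal because $\varphi$ is a ring homomorphism, and it is $G$-stable because $\varphi$ is $G$-equivariant: for $a \in \varphi^{-1}(I_n)$ and $g \in G$ we have $\varphi(ga) = g\varphi(a) \in gI_n = I_n$, so $ga \in \varphi^{-1}(I_n)$. The inclusions $I_n \subseteq I_{n+1}$ give $\varphi^{-1}(I_n) \subseteq \varphi^{-1}(I_{n+1})$, so we obtain an ascending chain of $G$-stable ideals of $R$.

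Next, since $R$ is $G$-Noetherian, this chain stabilises: there is an $m$ with $\varphi^{-1}(I_n) = \varphi^{-1}(I_m)$ for all $n \geq m$. Finally I would use surjectivity of $\varphi$ to push this equality back down to $S$: for any ideal $J$ of $S$ one has $\varphi(\varphi^{-1}(J)) = J$, so applying $\varphi$ to the equality of preimages yields $I_n = I_m$ for all $n \geq m$. Hence every ascending chain of $G$-stable ideals of $S$ stabilises, i.e.\ $S$ is $G$-Noetherian.

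I do not expect a genuine obstacle here; the argument is entirely formal. The only two points deserving a line of justification are that taking preimages of $G$-stable ideals along a $G$-equivariant ring map again yields $G$-stable ideals, and that surjectivity is precisely what is needed to recover $I_n$ from $\varphi^{-1}(I_n)$ — this is the one place where the hypothesis that $\varphi$ is an epimorphism, rather than an arbitrary homomorphism, enters.
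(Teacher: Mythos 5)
Your argument is correct and is essentially the paper's own proof: the paper also lifts the chain to $R$ via preimages and uses surjectivity to recover the original chain, merely stating in one line what you spell out in detail. No further comment is needed.
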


\begin{proof}
Every chain $I_1 \subseteq I_2 \subseteq \ldots$ of $G$-stable ideals
in $S$ lifts to a chain in $R$. As the latter stabilises by
$G$-Noetherianity of $R$, so does the chain in $S$ by
surjectivity of the morphism $R \to S$.
\end{proof}

An analogue of Lemma \ref{lm:Union} is this.

\begin{lm} \label{lm:DirectSum}
Let $R$ and $S$ be $G$-rings. Then $R \oplus S$ is $G$-Noetherian
(respectively, radically $G$-Noetherian) if and only if both $R$ and $S$
are $G$-Noetherian (respectively, radically $G$-Noetherian).
\end{lm}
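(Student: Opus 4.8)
The plan is to reduce everything to the elementary structure of ideals in a product ring. Write $e=(1,0)$ and $f=(0,1)$ for the two complementary central idempotents of $R\oplus S$, and note that the relevant $G$-action is the componentwise one, $g(r,s)=(gr,gs)$, so that $e$ and $f$ are $G$-fixed. First I would record the standard fact that every ideal $K$ of $R\oplus S$ splits as $K=eK\oplus fK$, where $eK$ and $fK$ correspond to ideals $I\subseteq R$ and $J\subseteq S$, and that conversely every pair $(I,J)$ of ideals arises this way from $I\oplus S \cap R\oplus J = I\oplus J$. Because $e$ and $f$ are $G$-fixed, the ideal $K=I\oplus J$ is $G$-stable precisely when $I$ and $J$ are both $G$-stable; and since $\sqrt{I\oplus J}=\sqrt I\oplus\sqrt J$ (computed componentwise), such a $K$ is a radical ideal precisely when $I$ and $J$ are both radical.

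Given this dictionary, the forward implication is immediate: if $R$ and $S$ are $G$-Noetherian (respectively radically $G$-Noetherian), then an ascending chain $K_1\subseteq K_2\subseteq\cdots$ of $G$-stable (respectively $G$-stable radical) ideals of $R\oplus S$ splits into ascending chains $I_1\subseteq I_2\subseteq\cdots$ in $R$ and $J_1\subseteq J_2\subseteq\cdots$ in $S$ of the same type; these stabilise at some indices $m_1$ and $m_2$, and then $(K_n)$ stabilises at $\max(m_1,m_2)$. For the converse in the plain case I would simply invoke Lemma~\ref{lm:Epimorphism} applied to the two $G$-equivariant projection epimorphisms $R\oplus S\to R$ and $R\oplus S\to S$. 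For the radical converse, which is not literally covered by Lemma~\ref{lm:Epimorphism} as stated, I would argue directly: an ascending chain $I_1\subseteq I_2\subseteq\cdots$ of $G$-stable radical ideals of $R$ lifts to the chain $I_1\oplus S\subseteq I_2\oplus S\subseteq\cdots$ of $G$-stable ideals of $R\oplus S$, which are radical since $\sqrt{I_n\oplus S}=\sqrt{I_n}\oplus S=I_n\oplus S$; this chain stabilises by hypothesis, hence so does $(I_n)$, and symmetrically for $S$.

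I do not expect a genuine obstacle here; the only point worth a remark is that the statement implicitly concerns the componentwise $G$-action, matching the topological analogue in Lemma~\ref{lm:Union} — otherwise one would have to track how $G$ permutes the idempotents $e$ and $f$. Since $R$ and $S$ enter the hypothesis as separate $G$-rings, this action fixes $e$ and $f$, and the argument above goes through verbatim.
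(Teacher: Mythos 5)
Your proposal is correct and follows essentially the same route as the paper: the paper's proof is exactly the observation that every $G$-stable ideal of $R \oplus S$ decomposes as $I \oplus J$ with $I,J$ $G$-stable (and radical precisely when both components are), so that a chain stabilises if and only if both component chains do. Your extra scaffolding (the idempotents, the detour through Lemma~\ref{lm:Epimorphism} for one direction) is harmless but not needed beyond the componentwise dictionary.
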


\begin{proof}
A $G$-stable ideal of $R \oplus S$ is of the form $I \oplus J$ with $I$
a $G$-stable ideal in $R$ and $J$ a $G$-stable ideal in $S$. A chain of
such ideals stabilises if and only if the two component chains stabilise.
\end{proof}

Here is one possible analogue of Lemma \ref{lm:Surjection}.

\begin{lm} \label{lm:Monomorphism}
Let $R$ and $S$ be $G$-rings. If there exists a $G$-equivariant
homomorphism $\phi:R \rightarrow S$ such that $\phi^{-1}(\phi(I)S)=I$
for all $G$-stable ideals $I$ of $R$, and if $S$ is $G$-Noetherian,
then so is $R$.
\end{lm}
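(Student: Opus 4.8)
The plan is to mimic the proof of Lemma~\ref{lm:Epimorphism}, but transporting chains of $G$-stable ideals in the opposite direction along $\phi$. Given an ascending chain $I_1 \subseteq I_2 \subseteq \ldots$ of $G$-stable ideals of $R$, I would push each $I_n$ forward to the $G$-stable ideal $J_n := \phi(I_n)S$ of $S$ (this is $G$-stable because $\phi$ is $G$-equivariant, so $g \cdot \phi(a)s = \phi(ga)(gs)$ and the $gs$ range over $S$ as $s$ does). Since $I_n \subseteq I_{n+1}$ we get $J_n \subseteq J_{n+1}$, so this is an ascending chain of $G$-stable ideals in $S$; by $G$-Noetherianity of $S$ it stabilises, say $J_n = J_m$ for all $n \geq m$.

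Next I would pull back: applying $\phi^{-1}$ to the equality $J_n = J_m$ gives $\phi^{-1}(\phi(I_n)S) = \phi^{-1}(\phi(I_m)S)$ for all $n \geq m$. Now I invoke the hypothesis $\phi^{-1}(\phi(I)S) = I$, valid for every $G$-stable ideal $I$ of $R$, applied to $I = I_n$ and to $I = I_m$: the left side is $I_n$ and the right side is $I_m$. Hence $I_n = I_m$ for all $n \geq m$, so the original chain stabilises, and $R$ is $G$-Noetherian. That is the entire argument.

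The only point requiring a moment's care is that $J_n := \phi(I_n)S$ genuinely is a $G$-stable ideal of $S$, so that the hypothesis on $\phi$ applies to it via the $I_n$ (the hypothesis is phrased in terms of $G$-stable ideals $I$ of $R$, and we are using it only for the $I_n$ themselves, so even this is automatic). There is no real obstacle here: the hypothesis $\phi^{-1}(\phi(I)S)=I$ is precisely the ``injectivity on $G$-stable ideals'' condition that makes the pull-back of a stabilised chain stabilised, so the lemma is essentially a formal consequence of the definitions once the chain has been transported forward and back. I would expect the proof in the paper to be three or four lines long, along exactly these lines.
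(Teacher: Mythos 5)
Your argument is correct and coincides with the paper's own proof: both push the chain forward to $J_i:=\phi(I_i)S$, stabilise it using $G$-Noetherianity of $S$, and pull back via the hypothesis $\phi^{-1}(\phi(I)S)=I$ to stabilise the original chain. Nothing is missing.
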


\begin{proof}
For a chain $I_1 \subseteq I_2 \subseteq \ldots$ of $G$-stable ideals
in $R$ the ideals $J_i:=\phi(I_i)S$ form a chain of $G$-stable ideals in
$S$. As $S$ is $G$-Noetherian, we have $J_{i+1}=J_i$ for all sufficiently
large $i$. But then also $I_{i+1}=\phi^{-1}(J_{i+1})=\phi^{-1}(J_i)=I_i$,
as required.
\end{proof}

For scheme-theoretic finiteness of chirality varieties in characteristic
$0$ we need another construction of $G$-Noetherian algebras. Consider a
$K$-algebra $R$ acted upon by two groups $G$ and $H$, where the actions
have the following four properties: $G$ and $H$ act by $K$-algebra
automorphisms; actions of $G$ and $H$ on $R$ commute; every element
of $R$ is contained in a finite-dimensional $H$-module; and every
finite-dimensional $H$-submodule of $R$ splits as a direct sum of
irreducible $H$-modules. By the second property, the $K$-algebra $R^H$
of $H$-invariants is $G$-stable.

\begin{prop} \label{prop:Reynolds}
If $R$ is $G$-Noetherian, then so is $R^H$. 
\end{prop}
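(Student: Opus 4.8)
The plan is to exploit the Reynolds operator coming from the decomposition of $R$ into irreducible $H$-modules. Since every element of $R$ lies in a finite-dimensional $H$-submodule and every such submodule splits into irreducibles, the whole ring $R$ decomposes as an $H$-module into isotypic components; let $\rho : R \to R^H$ be the projection onto the trivial-isotypic component, i.e.\ the Reynolds operator. Because the $G$-action commutes with the $H$-action, $G$ permutes the isotypic components trivially (each $g \in G$ sends a trivial $H$-submodule to a trivial $H$-submodule), so $\rho$ is $G$-equivariant. The crucial algebraic property of $\rho$ is that it is an $R^H$-module homomorphism: $\rho(af) = a\rho(f)$ for $a \in R^H$ and $f \in R$; this follows because multiplication by an $H$-invariant $a$ commutes with the $H$-action and hence with the isotypic projection.

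First I would use this to show that for every $G$-stable ideal $I \subseteq R^H$ one has $\rho(IR) = I$. The inclusion $I \subseteq \rho(IR)$ is clear since $\rho$ fixes $R^H$ pointwise. For the reverse inclusion, an element of $IR$ is a finite sum $\sum_j a_j f_j$ with $a_j \in I \subseteq R^H$ and $f_j \in R$; applying $\rho$ and using $R^H$-linearity gives $\rho(\sum_j a_j f_j) = \sum_j a_j \rho(f_j) \in I$ since $\rho(f_j) \in R^H$ and $I$ is an ideal of $R^H$. Hence $\rho(IR) \subseteq I$, and equality holds. (Here $G$-stability of $I$ is not even needed for this identity, but it guarantees that $IR$ is a $G$-stable ideal of $R$.)

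Next I would run the standard ascending-chain argument. Let $I_1 \subseteq I_2 \subseteq \ldots$ be a chain of $G$-stable ideals in $R^H$. Then $I_1 R \subseteq I_2 R \subseteq \ldots$ is a chain of $G$-stable ideals in $R$, which stabilises by $G$-Noetherianity of $R$: there is an $m$ with $I_n R = I_m R$ for all $n \geq m$. Applying $\rho$ and the identity from the previous step, $I_n = \rho(I_n R) = \rho(I_m R) = I_m$ for all $n \geq m$. Thus the original chain stabilises and $R^H$ is $G$-Noetherian.

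The main obstacle is the verification that the isotypic projection $\rho$ really exists as a well-defined $K$-linear map on all of $R$ and that it is simultaneously a $G$-equivariant map and an $R^H$-module homomorphism. The existence requires checking that the isotypic decompositions of the various finite-dimensional $H$-submodules containing a given element are compatible, so that $\rho$ is unambiguously defined; this is where complete reducibility of finite-dimensional $H$-modules is used in an essential way, and implicitly where the characteristic-zero hypothesis on $K$ enters (via the existence of a Reynolds operator, as remarked after the statement of Theorem \ref{thm:VandermondeScheme}). The $R^H$-linearity then reduces to the observation that for $a \in R^H$ the multiplication operator $m_a$ on $R$ is an $H$-module endomorphism, hence preserves each isotypic component and in particular commutes with $\rho$; and $G$-equivariance follows since each $g \in G$ is an $H$-module automorphism of $R$, hence likewise commutes with the isotypic projection.
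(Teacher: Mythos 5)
Your proof is correct and takes essentially the same route as the paper: the paper verifies $RI\cap R^H=I$ via the $R^H$-linearity of the Reynolds operator and then invokes Lemma \ref{lm:Monomorphism} for the inclusion $R^H\hookrightarrow R$, which is exactly the ascending-chain argument you write out directly using the equivalent identity $\rho(IR)=I$. (One small remark: the complete-reducibility of finite-dimensional $H$-submodules is a stated hypothesis of the proposition rather than a consequence of characteristic zero, which only enters in the application to chirality varieties.)
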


The proof of this proposition uses the Reynolds operator $\rho:R
\rightarrow R^H$, defined as follows. For $f \in R$ let $U$ be a
finite-dimensional $H$-submodule of $R$ containing $f$. Split $U=U_0
\oplus U_1$ where $U_0$ is the sum of all trivial $H$-modules in $U$
and $U_1$ is the sum of all non-trivial irreducible $H$-modules in
$U$. Split $f=f_0+f_1$ accordingly. Then $\rho(f):=f_0$. A standard
verification shows that this map is well-defined and an $R^H$-module
homomorphism $R \rightarrow R^H$. See, for instance, 
\cite{Derksen02,Goodman98,Kraft96}.

\begin{proof}
By Lemma \ref{lm:Monomorphism} it suffices to show that $R I \cap
R^H=I$ for all ideals $I$ of $R^H$. This follows from a standard argument
involving the Reynolds operator: Write $f \in R I \cap R^H$ as $\sum_i
r_i f_i$ with $r_i \in R$ and $f_i \in I$. As $f$ is
$H$-invariant we have 
\[ f=\rho(f)=\sum_i \rho(r_i) f_i \in I, \]
where the last step uses that $\rho$ is an $R^H$-module
homomorphism.
\end{proof}

\section{Proofs of the main theorems}

We retain the setting and notation of Section
\ref{sec:FinitenessProblems}. If we can prove that the ambient
topological space $A_\infty(K)$ is $G$-Noetherian, then by
Lemma \ref{lm:GNoethForKSets} there exist finitely many elements
$f_1,\ldots,f_l$ of the ideal $I_\infty$ of $Y_\infty$ such that $y \in
A_\infty(K)$ lies in $Y_\infty(K)$ if and only if $f_i(gy)=0$ for all
$i$ and all $g \in G$. Choosing $m$ such that $f_1,\ldots,f_l \in I_m$
we then have, for $n \geq m$,
\[ Y_n(K)=\{y \in A_n(K) \mid f_i(gy)=0 \text{ for }
	i=1,\ldots,l \text{ and all } g \in G_n\} \]
by Lemma \ref{lm:InftyChain}, which proves the desired set-theoretic
result. As similar reasoning, assuming that $A_\infty$ is
scheme-theoretically $G$-Noetherian, would yield that for some $m$ and
all $n \geq m$ the ideal of $Y_n$ is generated by the $G_n$-translates
of the ideal of $Y_m$.  Unfortunately, neither the topological space
$\OMn[\infty](K)$ nor the scheme $\Snk{\infty}$ is $G$-Noetherian,
as the following example shows.

\begin{ex}
Consider the monomials
\[ f_2:=y_{12} y_{21},\ f_3:=y_{12}y_{23}y_{31},\
f_4:=y_{12}y_{23}y_{34}y_{41},\ \ldots \]
in the coordinate ring of $\OMn[\infty]$, as well as the
points $p_2,p_3,\ldots \in \OMn[\infty](K)$ where $p_i$ is
an off-diagonal $\NN \times \NN$-matrix with $1$'s on the positions
corresponding to the variables appearing in $f_i$ and
zeroes elswhere. Then we have $f_i(Gp_j)=0$ for all $i \neq
j$ and $f_i(p_i)=1$. Hence the sequence $X_1 \supseteq X_2
\supseteq \ldots$ of $G$-stable closed sets defined by 
\[ X_i:=\{p \in \OMn[\infty](K) \mid f_j(Gp)=\{0\} \text{
for all } j \leq i\} \]
does not stabilise, since $p_{i+1} \in X_i \setminus X_{i+1}$. It
is easy to find a similar example showing that $\Snk{\infty}$ is not
$G$-Noetherian; see \cite[Proposition 5.2]{Aschenbrenner07}.
\end{ex}

Our strategy in both cases is to replace $A_\infty$ by a closed $G$-stable
subscheme $\tA$, which contains $Y_\infty$ and such that $\tA$ {\em
is} $G$-Noetherian (for chirality varieties) or at least $\tA(K)$ is
$G$-Noetherian (for the $k$-factor model).

\subsection*{The $k$-factor model}
In this section $\tA$ equals the subscheme $\tOMk$ of $\OMn[\infty]$
whose ideal is generated by all off-diagonal $(k+1)\times(k+1)$-minors
of the off-diagonal matrix $(y_{ij})_{i \neq j}$. 

\begin{thm}
The topological $\Sym(\NN)$-space $\tOMk(K)$ is $\Sym(\NN)$-Noetherian. 
\end{thm}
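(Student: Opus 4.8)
The plan is to realize $\tOMk(K)$ as a $G$-equivariant image — or better, as glued together from finitely many pieces each of which is a fibre bundle $G \times_H Z$ over an $H$-Noetherian base — so that the topological machinery of Section~\ref{sec:TopGNoetherianity}, in particular Lemma~\ref{lm:Surjection} and Lemma~\ref{lm:FibreBundle}, applies. The point of passing to $\tOMk$ rather than $\OMn[\infty]$ is that a $K$-point of $\tOMk$ is precisely an off-diagonal $\NN\times\NN$-matrix all of whose $(k+1)\times(k+1)$-minors (with disjoint row and column index sets) vanish; such a matrix is the off-diagonal part of a genuine rank-$\le k$ matrix over $K$ only after one allows the diagonal to be filled in arbitrarily, but the key structural fact is that it still looks locally like a low-rank object.

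First I would set up the parameterisation: a rank-$\le k$ symmetric (or general) matrix is, generically, a product $B^\top C$ with $B, C$ having $k$ rows, i.e.\ each column of the big matrix is determined by a vector in $K^k$ (a ``point''), and the $(i,j)$-entry is a bilinear pairing of the $i$-th and $j$-th points. So $\tOMk(K)$ receives a $G$-equivariant continuous surjection from a space of the form $\{(v_i)_{i\in\NN} : v_i \in K^k\} \times (\text{pairing data})$ — concretely from $(K^k)^{\NN}$ suitably interpreted, or from the $K$-points of the variety of $k\times\NN$ matrices, on which $G=\Sym(\NN)$ permutes columns. By Lemma~\ref{lm:Surjection} it then suffices to prove that \emph{that} source space is $\Sym(\NN)$-Noetherian. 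This is where Theorem~\ref{thm:AHS} enters: the coordinate ring of $k\times\NN$ matrices is exactly $K[x_{ij} \mid i=1,\dots,k,\ j\in\NN]$ with $\Sym(\NN)$ permuting the second index, which is $\Sym(\NN)$-Noetherian by Theorem~\ref{thm:AHS}; hence by Lemma~\ref{lm:SchemeImpliesSetNoeth} its $K$-points form a $\Sym(\NN)$-Noetherian topological space.

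The remaining work is to actually produce the surjection $\Phi$. One must be careful: not every $K$-point of $\tOMk$ has all its \emph{diagonal-free} minors coming from a global rank-$\le k$ factorisation, because the relevant matrix need not have rank $\le k$ once we are only told about off-diagonal minors. So I expect the genuine construction to be a stratification: decompose $\tOMk(K)$ according to the ``local rank type'' (e.g.\ the ranks of various finite principal submatrices, or a maximal configuration of $r \le k$ independent columns together with the combinatorics of which rows they span), giving finitely many $G$-stable strata; realize each stratum, via a map that chooses a witnessing configuration, as a continuous $G$-equivariant image of $G \times_H Z$ where $H$ is the stabiliser of a chosen finite index set and $Z$ is cut out inside a finite product of affine spaces times a $\Sym(\NN)$-Noetherian ``tail'' space; invoke Lemma~\ref{lm:FibreBundle} to get $G \times_H Z$ to be $G$-Noetherian, Lemma~\ref{lm:Surjection} to push this to the stratum, and Lemma~\ref{lm:Union} (or rather the evident extension handling closures of strata, via Lemma~\ref{lm:ClosedSubset}) to assemble the finitely many pieces. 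A descending chain of $G$-stable closed subsets of $\tOMk(K)$ restricts to such a chain on each closed stratum-closure, each of which stabilises, and finiteness of the stratification forces the whole chain to stabilise.

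The main obstacle, I expect, is precisely the bookkeeping of this stratification and the verification that each stratum is genuinely a \emph{continuous} $G$-equivariant image of an $H$-Noetherian fibre bundle — in particular producing a \emph{global} section-like map $Z \to \tOMk(K)$ that is defined on an entire stratum and is $G$-equivariant after the $G\times_H$ construction, handling the fact that a matrix satisfying only the off-diagonal minor conditions need not literally factor, so one must carry extra ``pairing'' parameters recording the bilinear form up to the ambiguity caused by the missing diagonal. Closure/topology subtleties — whether the stratifying conditions are genuinely closed or only locally closed, and hence whether Lemma~\ref{lm:Union} suffices or one needs to argue with closures and Lemma~\ref{lm:ClosedSubset} — will also need care. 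Everything else (applying Theorem~\ref{thm:AHS}, Lemma~\ref{lm:SchemeImpliesSetNoeth}, Lemma~\ref{lm:FibreBundle}, Lemma~\ref{lm:Surjection}) is then essentially formal.
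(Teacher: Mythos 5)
Your overall architecture is the right one---and indeed the one the paper uses: cover $\tOMk(K)$ by pieces, each a continuous $\Sym(\NN)$-equivariant image of a fibre bundle $\Sym(\NN)\times_H X$ with $X$ an $H$-Noetherian space obtained from Theorem~\ref{thm:AHS}, and assemble via Lemmas~\ref{lm:FibreBundle}, \ref{lm:Surjection}, \ref{lm:Union} and~\ref{lm:ClosedSubset}. But the step you defer as ``the remaining work'' is precisely the mathematical content of the proof, and your sketch of it (a stratification by ``local rank type'') stops exactly where it must be made precise. Two concrete ingredients are missing. First, the degenerate locus is handled by \emph{induction on $k$}: the points of $\tOMk(K)$ all of whose off-diagonal $k\times k$-minors vanish form exactly $\tOMk[k-1](K)$, which is $\Sym(\NN)$-Noetherian by the induction hypothesis; this single dichotomy replaces your unspecified hierarchy of rank strata. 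Second, for a point $Y$ with some off-diagonal $k\times k$-minor nonzero---after acting by $\Sym(\NN)$ one may assume $\det Y[I,J]\neq 0$ with $I=\{1,\dots,k\}$ and $J=\{k+1,\dots,2k\}$---the vanishing of the bordered off-diagonal $(k+1)\times(k+1)$-minors forces the entire submatrix $Y[\NN\setminus J,\NN\setminus I]$ to factor honestly as $B\cdot C$ with $B,C$ of width $k$, while the remaining entries (rows indexed by $J$, columns indexed by $I$) are simply carried along as free variables. This is what produces the explicit base ring $R$ in the variables $b,c,d,e$---four times $k$ countable families plus finitely many extra variables---which is $H$-Noetherian for $H$ the pointwise stabiliser of $I\cup J$ by Theorem~\ref{thm:AHS} and Lemma~\ref{lm:BasisThm}, hence gives the $H$-Noetherian space $X=(\Spec R)(K)$ via Lemma~\ref{lm:SchemeImpliesSetNoeth}. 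You correctly observe that a point of $\tOMk(K)$ need not be the off-diagonal part of a rank-$\leq k$ matrix, but you do not supply this factorisation argument, which is the crux.

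Your worry about whether the strata are closed or merely locally closed is also resolved differently, and more cheaply, than you anticipate: the paper does not partition $\tOMk(K)$ at all. It builds one map $\phi$ from the disjoint union of $\tOMk[k-1](K)$ and $Z=\Sym(\NN)\times_H X$ whose image merely \emph{contains} $\tOMk(K)$ as a closed subset; Lemma~\ref{lm:Surjection} applied to the image and then Lemma~\ref{lm:ClosedSubset} finish the argument, so no disjointness, exact covering, or local closedness ever has to be checked. As it stands, your proposal is a correct plan with the central construction left as a gap rather than a proof.
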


\begin{proof}
We proceed by induction on $k$. For $k=0$ the statement is trivial,
since $\tOMk(K)$ consists of a single point. Suppose that the
statement is true for $k-1$. We shall construct a continuous and
$\Sym(\NN)$-equivariant map $\phi$ from a $\Sym(\NN)$-Noetherian
space to $\OMn[\infty](K)$ whose image contains $\tOMk(K)$ as a closed
subset. By Lemmas \ref{lm:Surjection} and \ref{lm:ClosedSubset} we are
then done. The required $\Sym(\NN)$-Noetherian space is the disjoint
union of $\tOMk[k-1](K)$, on which $\phi$ is the inclusion map, and a
second space $Z$, which will cover all points of $\tOMk(K)$ that are
not in $\tOMk[k-1](K)$.

For any (possibly infinite) matrix $Q$ and subsets $L,N$ of its row
index set and column index set, respectively, we write $Q[L,N]$ for the
corresponding submatrix of $Q$. To motivate the construction of $Z$,
set $I:=\{1,\ldots,k\}$ and $J:=\{k+1,\ldots,2k\}$ and consider a point
$Y$ in $\tOMk(K)$ such that $\det Y[I,J]$ is non-zero. We argue that
the matrix $Y[\NN \setminus J, \NN \setminus I]$ is an honest rank-$k$
matrix in the sense that there exist $b_{ip},c_{pj},\ i \in \NN \setminus
J, j \in \NN \setminus I, p=1,\ldots,k$ such that for all such $i,j$
with $i \neq j$ we have
\[ y_{ij}=\sum_{p=1}^k b_{ip} c_{pj}. \]
Indeed, it is clear that we can choose the $b_{ip}$ and $c_{pj}$ such that
this relation is satisfied for $(i,j) \in I \times (\NN \setminus I) \cup
(\NN \setminus J)\times J$. Then for $(i,j) \in (\NN \setminus (I \cup J))
\times (\NN \setminus (I \cup J))$ the relation will automatically be
satisfied due to the vanishing of the determinant of $Y[I \cup \{i\},J
\cup \{j\}]$ and the non-vanishing of the determinant of $Y[I,J]$. We
shall think of the remaining entries of $Y$, i.e., those $y_{ij}$ with
$i \in J$ or $j \in I$, as ``free variables''.  This leads us to consider
the ring
\[ 
R:=K[
(b_{ip})_{i \in \NN \setminus J, 1 \leq p \leq k},
(c_{pj})_{j \in \NN \setminus I, 1 \leq p \leq k},
(d_{ij})_{i \in \NN, j \in I},
(e_{ij})_{i \in J, j \in \NN \setminus I} 
].
\] 
On this ring the group $H:=\Sym(\NN \setminus (I \cup J))$, considered
as the pointwise stabiliser of $I \cup J$ in $\Sym(\NN)$, acts by
permuting the row indices of $b$ and $d$ and the column indices of $c$
and $e$. By Theorem \ref{thm:AHS} and Lemma \ref{lm:BasisThm} the ring
$R$ is $H$-Noetherian, since apart from $4k$ copies of countably many
variables on which the full symmetric group acts, $R$ has only finitely
many further variables. Hence by Lemma \ref{lm:SchemeImpliesSetNoeth}
the topological space $X:=(\Spec R)(K)$ is also $H$-Noetherian. Consider
the map $\phi_X:X \rightarrow \OMn[\infty](K)$ sending $(B,C,D,E)$ to the
off-diagonal matrix
\[
\begin{bmatrix}
D[I,I] & (B.C)[I,J] & (B.C)[I,\NN \setminus (I \cup J)]\\
D[J,I] & E[J,J] & E[J,\NN \setminus (I \cup J)]\\
D[\NN \setminus (I \cup J),I] & (B.C)[\NN \setminus (I \cup J),
J] & (B.C)[\NN \setminus (I \cup J),\NN \setminus (I \cup
J)]
\end{bmatrix},
\]
where the blocks on the diagonal
are projected into the relevant spaces of off-diagonal matrices.
The map $\phi_X$ is continuous and $H$-equivariant, and hence gives
rise to a unique continuous and $\Sym(\NN)$-equivariant map $\phi_Z$
from $\Sym(\NN) \times_H X$ into $\OMn[\infty](K)$ which maps the
equivalence class of $(e,x)$ to $\phi_X(x)$ for all $x$. By Lemma
\ref{lm:FibreBundle} the space $Z$ is $\Sym(\NN)$-Noetherian. As all
off-diagonal $k \times k$-minors are in the same $\Sym(\NN)$-orbit
(up to a sign), the above discussion shows that the image of $Z$
contains $\tOMk(K) \setminus \tOMk[k-1](K)$. Now the disjoint union of
$\tOMk[k-1]$ and $Z$ is $\Sym(\NN)$-Noetherian by Lemma \ref{lm:Union},
and the map $\phi$ which is the inclusion on $\tOMk[k-1]$ and $\phi_Z$
on $Z$ has $\im(\phi) \supset \tOMk$. This proves the theorem.
\end{proof} 

\begin{proof}[Proof of set-theoretic finiteness for the
$k$-factor model]

We spell out the proof of the first statement, which characterises
$\OMnk{n}$ for large $n$ by the condition that all principal $N_0
\times N_0$-submatrices lie in $\OMnk{N_0}$. First we argue that
there exist finitely many elements $f_1,\ldots,f_l$ in the
ideal of $\OMnk{\infty}$ such that $y \in \OMn[\infty](K)$ lies in
$\OMnk{\infty}(K)$ if and only if $f_i(gy)=0$ for $i=1,\ldots,l$
and all $g \in \Sym(\NN)$. Take $f_1$ equal to any off-diagonal
$(k+1)\times(k+1)$-determinant; these form a single
$\Sym(\NN)$-orbit up to a sign.
Requiring that $f_1(gy)=0$ for all $g$ forces $y$ to lie
in $\tOMk(K)$. As the latter space is $\Sym(\NN)$-Noetherian,
the closed $\Sym(\NN)$-stable subspace $\OMnk{\infty}(K)$ is cut
out by finitely many further equations $f_2,\ldots,f_l$; see Lemma
\ref{lm:GNoethForKSets}. Now take $N_0$ large enough such that $f_1,\ldots,f_l$ lie in
the coordinate ring of $\OMn[N_0]$. Then we have $y \in \OMnk{\infty}(K)$
if and only if $\pi_{\infty,N_0}(gy) \in \OMnk{N_0}(K)$ for all $g \in
\Sym(\NN)$. By Lemma \ref{lm:InftyChain} this implies that for all $n
\geq N_0$ an element $y \in \OMn(K)$ lies in $\OMnk{n}(K)$ if and only
if $\pi_{n,N_0}gy$ lies in $\OMnk{N_0}$ for all $g \in \Sym(n)$. This
proves the first statement of the theorem.

The second statement, which concerns the Zariski closure of
the $k$-factor model $F_{n,k}$, is proved in a similar fashion:
$\SOMnk{\infty}(K)$ is a closed $\Sym(\NN)$-stable subspace of $\tOMk(K)$,
hence characterised by finitely many equations. 
\end{proof}

\subsection*{Chirality varieties}

For chirality varieties we take $\tA$ to be the subscheme $\tVk$
of $\Snk{\infty}$ defined by all Pl\"ucker relations among the $y_J$
with $|J|=k$. From the parameterisation
\[ y_J=\prod_{i,j \in J, i<j} (x_i-x_j)=\det(x_j^i)_{j \in
J,\ i=0,\ldots,k-1} \]
it is clear that $\Vnk{\infty}$ is a subscheme of $\tVk$.  Theorem
\ref{thm:VandermondeScheme} will follow from the following theorem.

\begin{thm} \label{thm:tVkNoeth}
Assume that the characteristic of $K$ is zero. Then $\tVk$ is
scheme-theoretically $\Sym(\NN)$-Noetherian.
\end{thm}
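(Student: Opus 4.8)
The plan is to identify the coordinate ring $B$ of $\tVk$ with a ring of $\SL_k$-invariants, and then apply Proposition~\ref{prop:Reynolds} together with Theorem~\ref{thm:AHS}.

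First I would set up the invariant-theoretic model. Let $Z=(z_{pi})$ be a $k\times\NN$-matrix of indeterminates and put $R:=K[z_{pi}\mid 1\le p\le k,\ i\in\NN]$. Let $H:=\SL_k(K)$ act on $R$ through its linear action on the columns $(z_{1i},\dots,z_{ki})$ of $Z$, and let $G:=\Sym(\NN)$ act by permuting columns, $\sigma z_{pi}=z_{p,\sigma i}$; these two actions are by $K$-algebra automorphisms and commute. Sending $y_J$ to the maximal minor $\det Z[\,\cdot\,,J]$ (with the columns indexed by $J$ taken in increasing order) defines a $K$-algebra homomorphism $K[y_J\mid|J|=k]\to R$. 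By the first fundamental theorem of invariant theory for $\SL_k$ its image equals $R^{H}$, and by the second fundamental theorem its kernel is generated by the Pl\"ucker relations; both statements hold for each finite sub-index-set of $\NN$ and hence for $\NN$ itself by passing to the union. Consequently the homomorphism induces a $K$-algebra isomorphism $B\xrightarrow{\sim}R^{H}$.

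Next I would check that this isomorphism is $\Sym(\NN)$-equivariant: a direct sign computation shows $\sigma\cdot\det Z[\,\cdot\,,J]=(-1)^{a}\det Z[\,\cdot\,,\sigma J]$, where $a$ is the number of inversions of $\sigma$ on $J$, which is exactly the $\Sym(\NN)$-action on $\Snk{\infty}$ described in Section~\ref{sec:FinitenessProblems}. Now $R$ is $\Sym(\NN)$-Noetherian by Theorem~\ref{thm:AHS} (with $l=k$). Moreover the pair of actions of $G$ and $H$ on $R$ satisfies the four hypotheses of Proposition~\ref{prop:Reynolds}: both act by $K$-algebra automorphisms and commute; since $H$ acts column-wise and linearly, every $f\in R$ lies in the finite-dimensional, $H$-stable subspace of polynomials of degree at most $\deg f$ in the finitely many variables occurring in $f$; and in characteristic zero $\SL_k$ is linearly reductive, so every finite-dimensional $H$-submodule of $R$ decomposes into irreducibles. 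Hence Proposition~\ref{prop:Reynolds} yields that $R^{H}$, and therefore $B$, is $\Sym(\NN)$-Noetherian, i.e.\ $\tVk$ is scheme-theoretically $\Sym(\NN)$-Noetherian. Since $\Vnk{\infty}$ is a closed $\Sym(\NN)$-stable subscheme of $\tVk$, its ideal is then generated by the $\Sym(\NN)$-orbits of finitely many elements, and via Lemma~\ref{lm:InftyChain} this gives Theorem~\ref{thm:VandermondeScheme}.

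The only substantial input here is the classical identification of the homogeneous coordinate ring of the Grassmannian with the ring of $\SL_k$-invariants of a generic $k\times n$-matrix (the first and second fundamental theorems), applied in the limit $n\to\infty$; granting this, everything else is a formal consequence of Theorem~\ref{thm:AHS} and Proposition~\ref{prop:Reynolds}. The point requiring care is keeping track of the signs $(-1)^{a}$ so that the isomorphism $B\cong R^{H}$ is manifestly compatible with the $\Sym(\NN)$-action of Section~\ref{sec:FinitenessProblems}; and the characteristic-zero hypothesis is needed precisely because Proposition~\ref{prop:Reynolds} requires a Reynolds operator for $H=\SL_k$, which linear reductivity supplies.
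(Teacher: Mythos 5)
Your proposal is correct and follows essentially the same route as the paper's own proof: identify the coordinate ring of $\tVk$ with $R^{H}$ for $H=\SL_k(K)$ acting on a generic $k\times\NN$-matrix via the first and second fundamental theorems, then combine Theorem~\ref{thm:AHS} with Proposition~\ref{prop:Reynolds}. Your added checks (the sign computation for $\Sym(\NN)$-equivariance and the verification that every element of $R$ lies in a finite-dimensional $H$-submodule) are details the paper leaves implicit, but the argument is the same.
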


\begin{proof}
Consider the scheme $X=\Mmn{k,\NN}$ of $k \times \NN$-matrices with
coordinate ring $R=K[x_{ij} \mid i \in [k], j \in \NN]$.  Let $G =
\Sym(\NN)$ act on $X$ by permuting the columns, and let $H = \SL_k(K)$
act on $X$ by multiplication from the left. Now the conditions of
Proposition \ref{prop:Reynolds} are satisfied: complete reducibility for
$H$ in characteristic $0$ is classical, and $G$-Noetherianity of $R$ is
Theorem \ref{thm:AHS}. Hence $R^H$ is $G$-Noetherian. Now we claim that
the homomorphism sending $y_J$ to $\det(x[[k],J])$ is a
$\Sym(\NN)$-equivariant isomorphism
from the coordinate ring of $\tVk$ to $R^H$. This claim follows
from two well-known facts: First, the kernel of this homomorphism is
generated by the Pl\"ucker relations, which generate the defining ideal
of $\tVk$. Second, by the First Fundamental Theorem for $\SL_k$ the ring
of $\SL_k$-invariants on any space $\Mn[k,n]$ of {\em finite} matrices
is generated by the determinants $\det(x[[k],J])$ with $J \subseteq
[n]$ of size $k$ see \cite{Derksen02,Goodman98,Kraft96,Weyl39}; this
readily implies that $R^H$ is generated by these determinants as $J$
runs through all $k$-sets in $\NN$. Hence $\Vnk{\infty}=\Spec R^H$
is $Sym(\NN)$-Noetherian, as claimed.
\end{proof}

\begin{proof}[Proof of scheme-theoretic finiteness of
chirality varieties in characteristic zero]
The scheme $\tVk$ is cut out scheme-theoretically from $\Snk{\infty}$
by the Pl\"ucker relations, which form a single $\Sym(\NN)$-orbit. By
Theorem \ref{thm:tVkNoeth} the scheme $\tVk$ is $\Sym(\NN)$-Noetherian,
hence its subscheme $\Vnk{\infty}$ is cut out scheme-theoretically
from $\tVk$ by finitely many $\Sym(\NN)$-orbits of equations. Hence
the ideal of $\Vnk{\infty}$ in the coordinate ring of $\Snk{\infty}$ is
generated by finitely many $\Sym(\NN)$-orbits of equations, and Lemma
\ref{lm:InftyChain}, together with the remark following it, concludes
the proof.
\end{proof}

\section{Remarks}

We conclude the paper with a few remarks.
\begin{enumerate}

\item If one drops the characteristic-$0$ assumption in Theorem
\ref{thm:VandermondeScheme} one can still prove a set-theoretic finiteness
result: $\tVk(K)$ is a $\Sym(\NN)$-noetherian topological space.

\item In Theorem \ref{thm:VandermondeScheme} one may replace the
Vandermonde determinant by any other determinant of a square matrix
of which the entry at position $(i,j)$ equals $p_i(x_j)$ for some
fixed polynomials $p_1,\ldots,p_k$. Indeed, the resulting scheme
is still a closed subscheme of $\tVk$, and the argument in Section
\ref{sec:FinitenessProblems} putting the chirality varieties into the
framework of Lemma \ref{lm:InftyChain} applies unaltered.

\item So far we have not succeeded to prove scheme-theoretic finiteness
for the $k$-factor model. Even in the case where $k=1$, in which all
off-diagonal $2 \times 2$-determinants of an infinite off-diagonal matrix
are known to generate the ideal, it is not obvious that the quotient by
these determinants is $\Sym(\NN)$-Noetherian.

\item Proposition \ref{prop:Reynolds} is a powerful tool in proving
non-trivial finiteness results. We expect that it will be useful in many
other problems, as well.
\end{enumerate}


\end{document}